\numberwithin{equation}{section}    
\theoremstyle{plain}
\newtheorem{theo}{Theorem}[section]
\newtheorem{lem}[theo]{Lemma}
\newtheorem{prop}[theo]{Proposition}
\theoremstyle{definition}
\newtheorem{mydef}[theo]{Definition}
\newtheorem{rem}[theo]{Remark}
\newtheorem{bsp}[theo]{Example}
\title[A note on uniform convergence of Wiener-Wintner ergodic averages]{A note on uniform convergence of Wiener-Wintner ergodic averages}
\author[M. B\"ahring]{Markus B\"ahring}
\address{Max Planck Institut f\"ur Mathematik in den Naturwissenschaften, Leipzig, Germany}
\email{\texttt{markus.baehring@mis.mpg.de}}
\subjclass[2010]{37A15,37A30}
\keywords{Wiener-Wintner ergodic theorem, uniform convergence, actions of (countable) abelian groups}
\begin{document}

\begin{abstract}
We show uniform convergence of Wiener-Wintner ergodic averages for ergodic actions of (not necessarily countable) locally compact, second countable, abelian (LCA) groups. As a by-product, we obtain a finitary version of the van der Corput inequality for such groups.
\end{abstract}

\maketitle

\section{Introduction}
Let $(X,\mu)$ be a probability space and let $T: X \rightarrow X$ be a measure preserving transformation. The famous Wiener-Wintner theorem (see \cite{ww}) states that for every $f \in L_1(X,\mu)$ there exists a full measure subset $X' \subset X$ such that the averages
\begin{align}
\frac{1}{N} \sum\limits_{n=1}^N f(T^nx) \lambda^n
\end{align}
converge as $N \rightarrow \infty$ for every $x \in X'$ and every $\lambda \in \mathbb{T}$, where $\mathbb{T}$ denotes the unit circle. There are several proofs of this result, see for instance \cite{assani}, \cite{bellow} and \cite{furstenberg}. An important tool for one of the proofs is the decomposition 
\begin{align}
L_2(X,\mu)= H_{\text{kr}} \oplus H_{\text{wmix}},
\end{align}
where
\begin{align*}
H_\text{kr} = \overline{\text{span}}\left\{ f \in L_2(X,\mu) : f \circ T = \lambda f \text{ for some } \lambda \in \mathbb{T} \right\}
\end{align*}
is the \textbf{Kronecker factor} and 
\begin{align*}
H_\text{wmix} = \left\{ f \in L_2(X,\mu) : \frac{1}{N} \sum\limits_{n=1}^N \left| \int_X f(T^nx) \overline{f(x)} d \mu(x) \right| \overset{N \rightarrow \infty}\longrightarrow 0 \right\}
\end{align*}
is the \textbf{weakly mixing part}. The convergence of the averages in (1.1) on the Kronecker factor is straightforward and for the convergence on the weakly mixing part the van der Corput lemma (see \cite[Lemma 9.28]{eisner}) is used.\\
\indent Over the years this result has been improved and generalized in several ways. For instance Bourgain observed that the convergence of the averages in (1.1) for $f \in H_\text{wmix}$ is uniform in $\lambda$ (see \cite{bourgain}), cf. Assani \cite{assani}. Lesigne showed, that it is possible to replace $(\lambda^n)$ by polynomial sequences of the form $(e^{i P(n)})$, $P \in \mathbb{R}[X]$ (see \cite{lesign1}, \cite{lesign2}). A joint extension of this result and Bourgain's observation has been obtained by Frantzikinakis \cite{franz}.  Moreover, Host and Kra generalized the Wiener-Wintner theorem to the class of nilsequences \cite[Theorem 2.22]{host}. Eisner and Zorin-Kranich proved the corresponding uniform version (see \cite{eiskra}).\\
\indent A topological Wiener-Wintner Theorem is due to Robinson  \cite{robinson}, while Assani proved that for a uniquely ergodic system the convergence is uniform in $\lambda$ and $x \in X$ \cite[Theorem 2.10]{assani}. Recently Fan proved a topological version of Lesigne's Wiener-Wintner theorem for polynomial sequences \cite{fan}.\\
\indent Furthermore the Wiener-Wintner ergodic theorem has been transferred to actions of LCA-groups or more general amenable groups on probability spaces (see for example Zorin-Kranich \cite{pavel} as well as Schreiber \cite{schreiber} and Bartoszek, {\'S}piewak \cite{bartos} for the topological case). In the case of abelian groups the corresponding averages look as follows
\begin{align}
\frac{1}{m_G(F_n)} \int_{F_n} \xi(g) f(gx) dm_G(g),
\end{align}
where $(F_n)$ is a (tempered strong) F{\o}lner-sequence in $G$, $m_G$ is the Haar-measure on $G$, $\xi$ is a character of $G$ and $gx$ describes the action of an element $g \in G$ on $x \in X$.\\
\indent A uniform version of the Wiener-Wintner Theorem for abelian groups was proven by Lenz \cite[Corollary 1]{lenz} who showed that the convergence of (1.3) to zero for $f$ orthogonal to the corresponding Kronecker factor is uniform in $\xi$ (the set of characters) at least for uniquely ergodic actions of discrete LCA-groups. In this note we drop the assumptions of both unique ergodicity and discreteness and show the following generalization of Bourgain's observation and Assani's result mentioned above.
\begin{theo}[Uniform convergence of Wiener-Wintner ergodic averages]
Let $G$ be an LCA-group with Haar-measure $m_G$ acting continuously on a compact space $X$ and denote by $\hat{G}$ the dual group of $G$. Further let $\mu$ be an ergodic probability measure on $X$, $f \in L_2(X,\mu)$ with $f \in H_\text{wmix}$ and $(F_n)$ a tempered strong F{\o}lner-sequence in $G$. Then for almost every $x \in X$ 
\begin{align*}
\lim\limits_{n \rightarrow \infty} \sup\limits_{\xi \in \hat{G}} \left| \frac{1}{m_G(F_n)} \int_{F_n} \xi(g) f(gx) dm_G(g) \right| =0.
\end{align*}
Moreover, if the action of $G$ on $X$ is uniquely ergodic, then for all $f \in C(X) \cap H_\text{wmix}$
\begin{align*}
\lim\limits_{n \rightarrow \infty} \sup\limits_{\xi \in \hat{G}} \left\| \frac{1}{m_G(F_n)} \int_{F_n} \xi(g) f(gx) dm_G(g) \right\|_\infty =0.
\end{align*}
\end{theo}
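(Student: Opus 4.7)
The plan is to follow the philosophy of the Bourgain--Assani argument: one controls the averages
$$A_n(\xi, x) := \frac{1}{m_G(F_n)}\int_{F_n}\xi(g)f(gx)\,dm_G(g)$$
by a finitary van der Corput inequality, exploiting that for a character $\xi$ one has $\xi(k_1 g)\overline{\xi(k_2 g)} = \xi(k_1 k_2^{-1})$, independently of $g$. Once the van der Corput bound is applied, the right-hand side contains no character and $\sup_{\xi \in \hat{G}}$ can be pulled inside freely, which is precisely what turns Wiener--Wintner into a uniform statement. The weak mixing hypothesis then kills the resulting correlations $h \mapsto |\int_X f(hy)\overline{f(y)}d\mu(y)|$ on average over $h$, and the proof concludes.

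Concretely, I would first establish a finitary van der Corput lemma in the LCA-group setting: for bounded measurable $u:G \to \mathbb{C}$ and compact sets $F,K \subset G$,
\begin{align*}
\left|\frac{1}{m_G(F)}\int_F u(g)\,dm_G(g)\right|^2 \le \frac{1}{m_G(K)^2}\int_K\!\!\int_K \frac{1}{m_G(F)}\int_F u(k_1 g)\overline{u(k_2 g)}\,dm_G(g)\,dm_G(k_1)\,dm_G(k_2) + \varepsilon_F(K),
\end{align*}
where the error term $\varepsilon_F(K)$ is a boundary defect controlled by a quotient of the form $m_G(KF \triangle F)/m_G(F)$, which vanishes as $F$ runs through a strong F{\o}lner sequence with $K$ fixed. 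The proof is the standard one: rewrite the single average as a double average $(k,g) \mapsto u(kg)$ up to a strong F{\o}lner defect, apply the Cauchy--Schwarz inequality in $g$, and expand the square.

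Applying the inequality to $u(g) = \xi(g)f(gx)$ and using the identity $u(k_1 g)\overline{u(k_2 g)} = \xi(k_1 k_2^{-1}) f(k_1 gx)\overline{f(k_2 gx)}$, the $\xi$-dependence collapses to a unimodular prefactor; taking moduli yields
\begin{align*}
\sup_{\xi \in \hat{G}}|A_n(\xi,x)|^2 \le \frac{1}{m_G(K)^2}\int_K\!\!\int_K \left|\frac{1}{m_G(F_n)}\int_{F_n} f(k_1 gx)\overline{f(k_2 gx)}\,dm_G(g)\right|\,dm_G(k_1)\,dm_G(k_2) + \varepsilon_{F_n}(K).
\end{align*}
For fixed $k_1,k_2$, the substitution $g' = k_2 g$ with $h = k_1 k_2^{-1}$ rewrites the inner integral as an ergodic average of $y \mapsto f(hy)\overline{f(y)}$ along $(F_n)$, up to the negligible discrepancy between $F_n$ and $k_2 F_n$. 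By the Lindenstrauss pointwise ergodic theorem for tempered F{\o}lner sequences, this tends to $\int_X f(hy)\overline{f(y)}\,d\mu(y)$ for $\mu$-almost every $x$ (using ergodicity to identify the limit as a constant); a Fubini argument promotes this to a.e.\ convergence in $x$ for a.e.\ $(k_1,k_2) \in K \times K$, and dominated convergence produces
\begin{align*}
\limsup_{n \to \infty}\sup_{\xi \in \hat{G}}|A_n(\xi,x)|^2 \le \frac{1}{m_G(K)^2}\int_K\!\!\int_K \left|\int_X f(k_1 k_2^{-1} y)\overline{f(y)}\,d\mu(y)\right|\,dm_G(k_1)\,dm_G(k_2)
\end{align*}
for a.e.\ $x \in X$. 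A change of variables (using that $G$ is abelian) recasts the right-hand side as an average of $h \mapsto |\int_X f(hy)\overline{f(y)}d\mu(y)|$ over a set built from $K$; letting $K$ run through a F{\o}lner sequence and invoking the LCA characterization of $H_\text{wmix}$ drives this to zero, establishing the first assertion. For the uniquely ergodic case with $f \in C(X)$, one replaces the Lindenstrauss theorem by its uniform (sup norm) counterpart for continuous observables, and the identical chain of inequalities yields the $\|\cdot\|_\infty$ bound.

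The main obstacle I anticipate is the first step: writing down a van der Corput inequality whose error term is quantitative enough to vanish along every (tempered) strong F{\o}lner sequence, without further regularity assumptions on $(F_n)$ or $K$. A secondary subtlety is that the null set in Lindenstrauss' theorem depends on the parameter $h$, so the Fubini step in the third paragraph must invoke joint measurability of $(h,x) \mapsto \frac{1}{m_G(F_n)}\int_{F_n} f(hgx)\overline{f(gx)}\,dm_G(g)$ before the $n \to \infty$ limit can be exchanged with the integration over $K \times K$.
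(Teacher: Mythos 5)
Your plan is essentially the paper's own proof of Theorem 1.1: the same finitary van der Corput inequality (Lemma 4.1, with error terms $\sup_{h\in F_H}m_G(hF_n\Delta F_n)/m_G(F_n)$ handled by the uniform F{\o}lner property), the same observation that $\xi(k_1g)\overline{\xi(k_2g)}=\xi(k_1k_2^{-1})$ makes the right-hand side independent of $\xi$ once moduli are taken, and the same use of the weak-mixing characterization (Lemma 2.5) together with unimodularity to kill the limiting correlation averages. The one real deviation is how you deal with the uncountably many parameters $h=k_1k_2^{-1}$: you propose joint measurability plus Fubini to get convergence of the correlation averages for a.e.\ $(k_1,k_2)$ and a.e.\ $x$, then dominated convergence of the double integral over $F_H\times F_H$; the paper instead proves the stronger Lemma 3.4 (convergence for \emph{all} $h\in G$ on a single full-measure set, via a countable dense set of $h$'s, continuity of the action, continuous approximants of $f$, and the strong F{\o}lner property to control the $V$-boundary). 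For Theorem 1.1 your Fubini route suffices, since $h$ only ever appears under an integral, and it is arguably lighter --- it even suggests that the strong F{\o}lner hypothesis could be dropped here, as it enters the paper's argument only through Lemma 3.4. You must of course intersect the exceptional sets over the countably many $H$, which is automatic.

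There is one genuine gap: your van der Corput inequality is stated for bounded $u$ and its error terms carry $\|u\|_\infty^2$, so applying it to $u(g)=\xi(g)f(gx)$ only covers $f\in H_\text{wmix}\cap L_\infty(X,\mu)$, whereas the first assertion is claimed for all $f\in H_\text{wmix}\cap L_2(X,\mu)$. You need a closing approximation step: choose bounded $f_k$ with $\|f-f_k\|_2\to 0$ \emph{and} $f_k\in H_\text{wmix}$ (this uses that the orthogonal projection onto $H_\text{kr}$ comes from a factor, hence is a conditional expectation and preserves $L_\infty$), and then bound
$\sup_{\xi\in\hat G}\bigl|\frac{1}{m_G(F_n)}\int_{F_n}\xi(g)(f-f_k)(gx)\,dm_G(g)\bigr|
\le \frac{1}{m_G(F_n)}\int_{F_n}|f(gx)-f_k(gx)|\,dm_G(g)$,
which tends to $\|f-f_k\|_1$ for a.e.\ $x$ by the Lindenstrauss ergodic theorem. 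Without this step the argument does not reach the stated generality; for the uniquely ergodic part $f\in C(X)$ is bounded, so no issue arises there.
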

Note that the first part of the theorem is a generalization of a result by Lenz (see \cite[Corollary 1]{lenz}), who considered only uniquely ergodic actions of discrete LCA-groups. The second part of the theorem is very similiar but still different to results by Lenz (see \cite[Theorem 2]{lenz}) and Schreiber (see \cite[Corollary 1.13]{schreiber}). We want to point out that in the latter works the F{\o}lner-sequence in question is not assumed to be tempered. Schreiber even considered general (not necessarily strong) F{\o}lner-sequences. The main difference is that in both results the supremum is taken only over compact subsets $\Lambda \subset \hat{G}$.
\\ \\
\indent As an important tool for the proof we present a finitary version of van der Corput's inequality for complex-valued functions on an LCA-group, see Lemma 4.1. Moreover, to overcome the main technical difficulty for uncountable groups, we show the existence of a full measure subset, such that uncountably many  F{\o}lner-averages $\frac{1}{m_G(F_n)} \int_{F_n} f(hgx) \overline{f(gx)} dm_G(g)$ converge simultaneously for all $h \in G$ on this set, see Lemma 3.4.\\ \\
\indent Additionally in Section 3 we give a direct proof of the corresponding non-uniform Wiener-Wintner Theorem due to Zorin-Kranich \cite[Corollary 4.1]{pavel} via a decomposition in the spirit of (1.2), see Theorem 3.1 below.\\
\section{Tools}
\subsection{F{\o}lner-sequences}
We recall one of many equivalent definitions of amenability. An lcsc-group $G$ with left Haar-measure $m_G$ is called \textbf{amenable} if it admits a so-called (left) F{\o}lner-sequence $(F_n)$ in the sense of the following definition.
\newpage
\begin{mydef} 
Let $G$ be an lcsc-group with left Haar-measure $m_G$.
\begin{itemize}
\item[(i)] A sequence $(F_n)$ of nonempty compact sets is called a \textbf{(left) F{\o}lner-sequence} if
\begin{align}
\frac{m_G(K F_n \Delta F_n)}{m_G(F_n)} \overset{n \rightarrow \infty}\longrightarrow 0
\end{align}
holds for every compact set $K \subset G$.
\item[(ii)] A sequence $(F_n)$ of nonempty compact sets is called a \textbf{strong (left) F{\o}lner-sequence} if
\begin{align*}
\frac{m_G(\partial_K(F_n))}{m_G(F_n)} \overset{n \rightarrow \infty}\longrightarrow 0
\end{align*}
holds for every compact set $K \subset G$, where 
\begin{align*}
\partial_K(F_n) = \left\{ g \in G \text{ } : \text{ } Kg \cap F_n \not= \emptyset \text{ and } Kg \cap (G \backslash F_n) \not= \emptyset \right\} 
\end{align*}
is called the \textbf{$K$-boundary} of the set $F_n$.
\item[(iii)] A (left) F{\o}lner-sequence $(F_n)$ is called \textbf{tempered} if there exists some $C>0$ such that for all $n \in \mathbb{N}$
\begin{align*}
m_G \left( \bigcup_{k <  n}F_k^{-1} F_n \right) \le C m_G(F_n).
\end{align*}
\end{itemize}
\end{mydef}
Compact groups, abelian groups and solvable groups are examples of amenable groups. The free group $\mathbb{F}_2$ on two generators is not amenable (see \cite[Example 1.1.5 and 1.2.11]{runde}). For more informations see for instance \cite{green}, \cite{pier} and \cite{runde}.
\begin{rem}
\begin{itemize}
\item[(i)] The convergence in (2.1) is uniform, more precisely \linebreak $\frac{m_G(k F_n \Delta F_n)}{m_G(F_n)} \overset{n \rightarrow \infty}\longrightarrow 0$ uniform for $k \in K$, for every compact $K$ (see \cite[Theorem 3]{emerson}).
\item[(ii)] Every (left) F{\o}lner-sequence admits a tempered (left) F{\o}lner-subsequence (see \cite[Prop. 1.5]{linden}), which means that in particular every amenable group has a tempered (left) F{\o}lner-sequence.
\item[(iii)] It is straightforward to show that if $G$ is an unimodular lcsc-group and $(F_n)$ a left F{\o}lner-sequence in $G$, then $( F_n^{-1} )$ is a right F{\o}lner-sequence in $G$ and vice versa.
\item[(iv)] Every strong F{\o}lner-sequence is also a (usual) F{\o}lner-sequence. For countable groups also the converse is true (see \cite[Lemma 2.8]{pog}). Moreover, every amenable lcsc-group admits a strong F{\o}lner-sequence (see \cite[Lemma 2.6]{schwarzenberger} and note that the proof does not rely on the unimodularity of $G$) and therefore, using (ii), a tempered strong F{\o}lner-sequence.
\item[(v)] For the $K$-boundary of a subset $F \subset G$ we have
\begin{align}
\partial_K(F)=K^{-1}F \cap K^{-1}(G \backslash F)
\end{align}
(see \cite[Proposition 2.2]{pog}).
\end{itemize}
\end{rem}
\noindent Later on we will need the following elementary lemma.
\begin{lem}
Let $G$ be an lcsc-group with left Haar-measure $m_G$, $V$ a compact, symmetric neighbourhood of the neutral element $e \in G$ and $F \subset G$ a compact subset. Then
\begin{align}
VF \subseteq F \cup \partial_V (F).
\end{align}
\end{lem}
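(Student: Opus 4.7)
The plan is to unpack the boundary formula from Remark~2.2(v) and combine it with the two given properties of $V$, namely that $V$ is symmetric and that $V$, as a neighbourhood of $e$, contains $e$.

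First I would take an arbitrary element $g \in VF$ and distinguish two cases according to whether $g$ lies in $F$ or not. In the first case there is nothing to show, so the real content is the case $g \in G \setminus F$. Here I would use $e \in V$ to write $g = e g \in V(G \setminus F)$, which together with the hypothesis $g \in VF$ gives
\begin{align*}
g \in VF \cap V(G \setminus F).
\end{align*}

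Finally I would rewrite this intersection as $\partial_V(F)$. By Remark~2.2(v) one has $\partial_V(F) = V^{-1} F \cap V^{-1}(G \setminus F)$, and since $V$ is symmetric ($V^{-1} = V$) this coincides exactly with $VF \cap V(G \setminus F)$. Hence $g \in \partial_V(F)$ in this case, completing the inclusion $VF \subseteq F \cup \partial_V(F)$.

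There is no real obstacle here; the statement is essentially a definition-chasing lemma, and the only two things one genuinely uses are the symmetry of $V$ (to identify $V^{-1}$ with $V$) and the fact that $e \in V$ (to witness the membership in $V(G \setminus F)$). I would keep the argument to a few lines rather than elaborating further.
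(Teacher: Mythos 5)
Your argument is correct and is essentially the paper's own proof, just phrased elementwise: your case distinction $g\in F$ versus $g\notin F$ matches the paper's decomposition $VF\subseteq F\cup(VF\setminus F)$, and the two ingredients you isolate ($e\in V$ to get $G\setminus F\subseteq V(G\setminus F)$, and symmetry of $V$ to invoke $\partial_V(F)=V^{-1}F\cap V^{-1}(G\setminus F)$) are exactly the steps used there. Nothing to add.
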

\begin{proof}
We have
\begin{align*}
V F = (V F \cap F) \cup (V F \backslash F ) \subseteq F \cup (V F \backslash F).
\end{align*}
Therefore it is enough to show that
\begin{align*}
V F \backslash F \subseteq \partial_V(F).
\end{align*}
This can be verified as follows
\begin{align*}
V F \backslash F = VF \cap (G \backslash F) \overset{V \text{ unit ngbh.}}\subseteq V F \cap V (G \backslash F) \overset{V \text{ sym.}}= V^{-1} F \cap V^{-1} (G \backslash F) \overset{(2.2)}= \partial_V(F).
\end{align*}
\end{proof}
\noindent The following is well-known. We give a proof for the reader's convenience.
\begin{prop}
Let $G$ be an amenable group with left Haar-measure $m_G$, $\xi: G \longrightarrow \mathbb{T}$ a continuous homomorphism and $( F_n )$ a left F{\o}lner-sequence in $G$, then
\begin{align*}
A_n^\xi:=\frac{1}{m_G(F_n)} \int_{F_n} \xi(g) dm_G(g) \overset{n \rightarrow \infty}\longrightarrow \begin{cases}
1, \quad \text{if } \xi \equiv 1,\\
0, \quad \text{else}.
\end{cases}
\end{align*}
\begin{proof}
Let $\xi \not\equiv 1$. Then we have $|A_n^\xi| \le \frac{1}{m_G(F_n)} \int_{F_n} |\xi(g)| dm_G(g) = \frac{m_G(F_n)}{m_G(F_n)}=1$, which means, that the sequence is bounded. Therefore we can find a convergent subsequence. So let $(A_{n_k}^\xi)$ be such a sequence with $A_{n_k}^\xi \overset{k \rightarrow \infty}\longrightarrow a \in \mathbb{C}$. It follows for all $h \in G$
\begin{align*}
|\xi(h) \cdot a - a | &= \lim\limits_{k \rightarrow \infty} \frac{1}{m_G(F_{n_k})} \left| \int_{F_{n_k}} \xi(hg) dm_G(g) - \int_{F_{n_k}} \xi(g) dm_G(g) \right|\\
&= \lim\limits_{k \rightarrow \infty} \frac{1}{m_G(F_{n_k})} \left| \int_{hF_{n_k} \backslash (hF_{n_k} \cap F_{n_k})} \xi(g) dm_G(g) - \int_{F_{n_k} \backslash (hF_{n_k} \cap F_{n_k})} \xi(g) dm_G(g) \right|\\
&\le \lim\limits_{k \rightarrow \infty} \frac{m_G(hF_{n_k} \backslash (hF_{n_k} \cap F_{n_k})) + m_G(F_{n_k} \backslash (hF_{n_k} \cap F_{n_k}))}{m_G(F_{n_k})}\\
&= \lim\limits_{k \rightarrow \infty} \frac{m_G(hF_{n_k} \Delta F_{n_k})}{m_G(F_{n_k})}=0
\end{align*}
by the F{\o}lner-property  of $( F_n )$. As $\xi \not\equiv 1$ there is a $h \in G$ with $\xi(h) \not=1$ and therefore $a=0$ must hold. Now $(A_n^\xi)$ is a bounded sequence and every convergent subsequence converges to $0$. This implies that $(A_n^\xi)$ converges to $0$ itself.
\end{proof}
\end{prop}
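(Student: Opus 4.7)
The case $\xi \equiv 1$ is immediate since then $A_n^\xi = 1$ for every $n$. For the nontrivial case $\xi \not\equiv 1$ the plan is to exploit the interplay between the multiplicativity of $\xi$ and the asymptotic translation invariance encoded in the F{\o}lner property. Since $|\xi(g)| = 1$ on $G$, the sequence $(A_n^\xi)$ is bounded by $1$ in modulus, so by Bolzano--Weierstra{\ss} it has convergent subsequences; it will suffice to show that the only possible accumulation point is $0$, because a bounded complex sequence with a unique accumulation point converges to it.

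To see that every accumulation point $a \in \mathbb{C}$ must vanish, fix an arbitrary $h \in G$ and examine $\xi(h) A_n^\xi - A_n^\xi$. Using $\xi(hg) = \xi(h)\xi(g)$ and the left-invariance of $m_G$ (change of variables $g \mapsto h^{-1}g$), the first summand can be rewritten as an integral over $hF_n$, yielding
$$\xi(h) A_n^\xi - A_n^\xi = \frac{1}{m_G(F_n)} \left( \int_{hF_n} \xi \, dm_G - \int_{F_n} \xi \, dm_G \right).$$
Cancelling the common part on $hF_n \cap F_n$, the two remaining integrals are supported on $hF_n \setminus (hF_n \cap F_n)$ and $F_n \setminus (hF_n \cap F_n)$ respectively, and since $|\xi|=1$ they are bounded in modulus by $m_G(hF_n \Delta F_n)$. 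Dividing by $m_G(F_n)$ and applying the F{\o}lner property to the compact set $K = \{h\}$ shows that $\xi(h) A_n^\xi - A_n^\xi \to 0$.

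Passing to the limit along any convergent subsequence $A_{n_k}^\xi \to a$ gives $(\xi(h) - 1) a = 0$ for every $h \in G$. Choosing $h_0$ with $\xi(h_0) \neq 1$, which exists because $\xi \not\equiv 1$, forces $a = 0$. Hence $0$ is the unique accumulation point of the bounded sequence $(A_n^\xi)$, which therefore converges to $0$. The argument is entirely elementary; the only point requiring a moment of care is the symmetric-difference estimate, but this is a routine manipulation of integrals and needs no deeper input than the defining F{\o}lner property (2.1).
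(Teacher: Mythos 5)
Your proof is correct and follows essentially the same route as the paper's: bound $|A_n^\xi|$ by $1$, extract a convergent subsequence, show via the F{\o}lner property and the symmetric-difference estimate that any accumulation point $a$ satisfies $(\xi(h)-1)a=0$ for all $h$, and conclude $a=0$ since $\xi\not\equiv 1$. The only (harmless) difference is that you spell out the trivial case $\xi\equiv 1$ and the change of variables explicitly.
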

\subsection{Actions of groups}
Suppose that $G$ is an lcsc-group and $X$ is a compact, topological space. Then $X$ becomes a measure space if we equip it with its Borel-$\sigma$-algebra $\mathfrak{B}$. By $gx$ we denote the action of an element $g \in G$ on $x \in X$. A $G$-invariant measure $\mu$ on $X$ is called \textbf{ergodic} if $\mu(gB \Delta B)=0$ for a measurable set $B \in \mathfrak{B}$ and all $g \in G$ is only possible if $\mu(B) \in \left\{ 0,1 \right\}$.\\
\indent For every action of an amenable group $G$ on a compact topological space $X$ by homeomorphisms there exists at least one $G$-invariant (and also at least one ergodic) Borel-measure $\mu$ on $X$ (see \cite[Remark I.3.4]{bekka}). If this measure is unique (and hence automatically ergodic, see \cite[Prop. 3.1]{bekka}), then we call the action of $G$ on $X$ \textbf{uniquely ergodic}.\\
\indent Every measure-preserving action of a group $G$ on a probability space $(X,\mu)$ induces a family of unitary operators $T_g: L_2(X,\mu) \longrightarrow L_2(X,\mu)$ by $T_gf(x)= f(g^{-1}x)$. In particular $T: g \mapsto T_g$ is a unitary representation of $G$ on $L_2(X,\mu)$, called \textbf{Koopman representation}. Now one defines the following subspace of $L_2(X,\mu)$
\begin{align*}
H_\text{kr} := \overline{\text{span}} \left\{ f \in L_2(X,\mu) : \text{ there exists a } \xi \in \text{Hom}(G,\mathbb{T})  \text{ such that }  T_gf = \xi(g) f \text{ for all } g \in G \right\}
\end{align*}
where $\text{Hom}(G,\mathbb{T})$ denotes the set of continuous group homomorphism from $G$ into the unit circle $\mathbb{T}$. Note that $\text{Hom}(G,\mathbb{T}) = \hat{G}$, where $\hat{G}$ denotes the dual group of $G$, in the abelian case. We obtain a decomposition $L_2(X,\mu) = H_\text{kr} \oplus H_\text{wmix}$, where $H_\text{wmix} := H_\text{kr}^\perp$. The following lemma gives a characterization of $H_\text{wmix}$ for actions of LCA-groups (see \cite[Prop 3.2 and Corollary 3.3]{duven}).
\begin{lem}[Characterization of $H_\text{wmix}$ for LCA-groups]
Let $(X,\mu)$ be a probability space, $G$ an LCA-group with Haar-measure $m_G$ acting measure preserving on $(X, \mu)$, $( F_n ), (F_n^{'})$ F{\o}lner-sequences in $G$ and $f \in L_2(X,\mu)$. Then the following assertions are equivalent.
\begin{itemize}
\item[(i)] $f \in H_\text{wmix}$.
\item[(ii)] $\displaystyle \lim\limits_{n \rightarrow \infty} \frac{1}{m_G(F_n)} \int_{F_n} | \langle h, T_gf \rangle | dm_G(g)=0$ for all $h \in L_2(X,\mu)$.
\item[(iii)] $\displaystyle \lim\limits_{n \rightarrow \infty} \frac{1}{m_G(F_n)m_G(F_n')} \int_{F_n} \int_{F_n'} | \langle h, T_{g_1} T_{g_2}f \rangle | dm_G(g_1) dm_G(g_2)=0$ for all $h \in L_2(X,\mu)$.
\end{itemize}
\end{lem}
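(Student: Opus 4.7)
The plan is to apply the spectral theorem to the Koopman representation $(T_g)_{g \in G}$ and translate each of the three conditions into a statement about spectral measures on $\hat{G}$. Since $G$ is LCA, there exists a projection-valued measure $E$ on $\hat{G}$ with $T_g = \int_{\hat{G}} \xi(g)\, dE(\xi)$, and for $h, f \in L_2(X,\mu)$ the finite complex Borel measure $\mu_{h,f} := \langle h, E(\,\cdot\,) f \rangle$ on $\hat{G}$ satisfies $\langle h, T_g f \rangle = \int_{\hat{G}} \xi(g)\, d\mu_{h,f}(\xi)$. Atoms of $\mu_{f,f}$ correspond exactly to those characters $\xi$ for which $f$ has a nontrivial $\xi$-eigencomponent, so $f \in H_{\text{wmix}}$ is equivalent to $\mu_{f,f}$ being non-atomic. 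The Cauchy-Schwarz bound $|\mu_{h,f}(\{\xi\})|^2 \le \mu_{f,f}(\{\xi\})\, \mu_{h,h}(\{\xi\})$ makes this in turn equivalent to $\mu_{h,f}$ being non-atomic for every $h \in L_2(X,\mu)$.

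The core technical step is a Wiener-type identity: for every finite complex Borel measure $\mu$ on $\hat{G}$ and every F\o lner sequence $(F_n)$ in $G$,
\begin{align*}
\frac{1}{m_G(F_n)} \int_{F_n} \bigl| \widehat{\mu}(g) \bigr|^2\, dm_G(g) \overset{n \to \infty}{\longrightarrow} \sum_{\xi \in \hat{G}} |\mu(\{\xi\})|^2,
\end{align*}
where the sum runs over the at most countably many atoms of $\mu$. I would prove this by expanding $|\widehat{\mu}(g)|^2$ via Fubini as a double integral against $d\mu(\xi)\, d\overline{\mu}(\eta)$ and applying Proposition 2.3 to the character $\xi \overline{\eta}$, whose F\o lner averages converge pointwise to $\mathbf{1}_{\{\xi = \eta\}}$; dominated convergence against the finite product measure $|\mu| \otimes |\mu|$ delivers the limit. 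The same argument applied to the product F\o lner sequence $(F_n \times F_n')$ in $G \times G$ yields the analogous two-variable identity for the double average $\frac{1}{m_G(F_n)\, m_G(F_n')} \int_{F_n} \int_{F_n'} |\widehat{\mu}(g_1 g_2)|^2\, dm_G(g_1)\, dm_G(g_2)$.

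Given this, (i)$\,\Rightarrow\,$(ii) and (i)$\,\Rightarrow\,$(iii) are immediate: for $f \in H_{\text{wmix}}$ the measure $\mu_{h,f}$ is non-atomic, so the one- and two-variable Wiener identities force the respective $L^2$-averages of $|\langle h, T_g f \rangle|$ and $|\langle h, T_{g_1} T_{g_2} f \rangle|$ to vanish, and Cauchy-Schwarz converts these into the stated $L^1$-statements. For the reverse implications I argue contrapositively: if $f \notin H_{\text{wmix}}$, then $\mu_{f,f}$ has some atom $\xi_0$, and setting $h := E(\{\xi_0\}) f$ makes $\mu_{h,f}$ the point mass $\|h\|^2 \delta_{\xi_0}$. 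Hence $|\langle h, T_g f \rangle| = \|h\|^2 > 0$ is constant in $g$, so neither the single average in (ii) nor the double average in (iii) can tend to zero.

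The main obstacle is a careful justification of the dominated-convergence step in the Wiener identity when $\hat{G}$ is uncountable: one has to check that the kernel $\frac{1}{m_G(F_n)} \int_{F_n} \xi(g) \overline{\eta(g)}\, dm_G(g)$ is uniformly bounded in $(\xi, \eta)$ and $n$ (which it is, by $1$) and that it converges $\mu \otimes \overline{\mu}$-almost everywhere to the indicator $\mathbf{1}_{\{\xi = \eta\}}$, which is supported on the countable set of atoms of $\mu$ and is therefore $\mu \otimes \overline{\mu}$-measurable.
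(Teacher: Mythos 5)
Your argument is essentially correct, but it necessarily differs from the paper, because the paper does not prove this lemma at all: it is quoted from Duvenhage \cite[Prop.\ 3.2 and Cor.\ 3.3]{duven}, whose route goes through the mean ergodic theorem applied to the tensor square $f\otimes\overline{f}$ (a splitting into invariant vectors of $T_g\otimes\overline{T_g}$ plus a mixing remainder), an approach that survives in noncommutative and non-abelian settings. Your route instead exploits abelianness to the hilt: the SNAG spectral theorem turns everything into statements about the spectral measures $\mu_{h,f}$ on $\hat G$, membership in $H_{\text{wmix}}$ becomes non-atomicity of $\mu_{f,f}$, and the generalized Wiener lemma (F{\o}lner averages of $|\widehat{\mu}|^2$ converge to the sum of squared atom masses, proved exactly as you say via Fubini, the paper's Proposition 2.4 --- not 2.3 --- applied to $\xi\overline{\eta}$, and dominated convergence over the finite measure $|\mu|\otimes|\mu|$) delivers both (ii) and (iii); the contrapositive direction with $h=E(\{\xi_0\})f$ is clean and correct. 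This buys a self-contained, quantitative proof in the LCA case at the price of tools that do not generalize beyond abelian groups. Three small points to tighten: (a) to invoke SNAG you need the Koopman representation to be strongly continuous, which holds here because the action is a continuous (or at least jointly measurable) action on a compact metrizable space, so that $g\mapsto\langle h,T_gf\rangle$ is weakly measurable on a separable Hilbert space --- state this, since the lemma as written only assumes ``measure preserving''; (b) for (iii) you should either verify that $(F_n\times F_n')$ is a F{\o}lner sequence in $G\times G$ or, more simply, note that the double average factors into the product of two single-character averages $\bigl(\frac{1}{m_G(F_n)}\int_{F_n}(\xi\overline{\eta})\bigr)\bigl(\frac{1}{m_G(F_n')}\int_{F_n'}(\xi\overline{\eta})\bigr)$, each of which tends to $\mathbf{1}_{\{\xi=\eta\}}$; (c) the passage from the $L^2$-type Wiener identity to the $L^1$-averages in (ii) and (iii) is by Cauchy--Schwarz in the direction you use it, which is fine, and your contrapositive neatly avoids needing the reverse inequality.
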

\subsection{Generic points} Let $(X,\mu)$ be a probability space on which an amenable lcsc-group $G$ acts ergodically. We say that $x \in X$ is a \textbf{generic point} for a measurable function $f$ with $\int_X |f(y)| d\mu(y) < \infty$ (with respect to the  F{\o}lner-sequence $(F_n)$) if 
\begin{align*}
\frac{1}{m_G(F_n)} \int_{F_n} f(gx) dm_G(g) \overset{n \rightarrow \infty}\longrightarrow \int_X f(y) d\mu(y).
\end{align*}
If $(F_n)$ is a tempered F{\o}lner-sequence then for every such $f$ almost every $x \in X$ is generic by the Birkhoff ergodic theorem for amenable groups (see \cite[Theorem 3.1]{linden}).
\section{Proof of Wiener-Wintner Theorem for abelian groups}
The following Wiener-Wintner theorem for LCA-groups is in a more general setting due to Zorin-Kranich \cite[Corollary 4.1]{pavel}. 
\begin{theo}[Wiener-Wintner Theorem for actions of LCA-groups]
Let $G$ be an LCA-group with Haar-measure $m_G$, which acts continuously on a compact space $X$. Further let $\mu$ be an ergodic probability measure on $X$ and $f \in L_1(X,\mu)$. Then there exists a subset $X' \subset X$ with $\mu(X')=1$ such that for every tempered strong F{\o}lner-sequence $( F_n )$ in $G$ 
\begin{align*}
\frac{1}{m_G(F_n)} \int_{F_n} \xi(g) f(gx) dm_G(g)
\end{align*}
converges as $n \rightarrow \infty$ for all $\xi \in \hat{G}$ and for all $x \in X'$.
\end{theo}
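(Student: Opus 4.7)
The plan is to use the decomposition $L_2(X,\mu) = H_\text{kr} \oplus H_\text{wmix}$ from Section 2.2, treat each summand separately, and reduce the $L_1$ case to the $L_2$ case by density together with Lindenstrauss' maximal ergodic inequality (which, for a tempered F{\o}lner sequence, controls $\sup_n \tfrac{1}{m_G(F_n)}\int_{F_n} |f(gx)|\, dm_G(g)$; since $|\xi(g)|=1$ the same maximal function dominates the character-weighted averages uniformly in $\xi$). Writing $f = f_\text{kr} + f_\text{wm}$ in $L_2$, it suffices to prove pointwise convergence on each summand on a full-measure set that does not depend on $\xi$.

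For the Kronecker part I would start with an eigenfunction $e \in L_2(X,\mu)$ satisfying $T_g e = \chi(g) e$, equivalently $e(gx) = \overline{\chi(g)}\,e(x)$ for $\mu$-a.e.\ $x$. The average then equals $e(x)\cdot A_n^{\xi\overline{\chi}}$, and Proposition 2.3 gives the limit $e(x)$ if $\xi=\chi$ and $0$ otherwise, simultaneously for every $\xi\in\hat G$ at each such good point $x$. Linearity handles every finite linear combination of eigenfunctions; to pass to the closure $H_\text{kr}$ I would fix a countable dense family of such combinations and run a Banach-principle argument based on the Lindenstrauss maximal inequality, using that $|A_n(\xi,f)(x) - A_n(\xi, f_k)(x)| \le M(|f - f_k|)(x)$ is independent of $\xi$, so that one null set handles all characters at once.

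For the weakly mixing part, the plan is to apply the finitary van der Corput inequality (Lemma 4.1) to $u(g):=\xi(g)f_\text{wm}(gx)$: for every compact symmetric $H\subset G$,
\begin{align*}
\limsup_{n\to\infty}\left|\frac{1}{m_G(F_n)}\int_{F_n}u(g)\,dm_G(g)\right|^2 \le \limsup_{n\to\infty}\frac{1}{m_G(H)^2}\int_H\int_H\left|\frac{1}{m_G(F_n)}\int_{F_n}u(h_1g)\overline{u(h_2g)}\,dm_G(g)\right|dm_G(h_1)\,dm_G(h_2).
\end{align*}
The crucial point is that $u(h_1g)\overline{u(h_2g)}=\xi(h_1h_2^{-1})\,f_\text{wm}(h_1gx)\overline{f_\text{wm}(h_2gx)}$, so the outer absolute value erases the $\xi$-factor. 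A change of variables $g\mapsto h_2^{-1}g$ combined with the F{\o}lner property, which is uniform in $h_1,h_2\in H$ by Remark 2.2(i), reduces the inner integral to the correlation $\tfrac{1}{m_G(F_n)}\int_{F_n}f_\text{wm}(hgx)\overline{f_\text{wm}(gx)}\,dm_G(g)$ with $h=h_1h_2^{-1}$; by Birkhoff's ergodic theorem for amenable groups applied to $y\mapsto f_\text{wm}(hy)\overline{f_\text{wm}(y)}$ this tends to $\langle T_{h^{-1}}f_\text{wm},f_\text{wm}\rangle$ as $n\to\infty$. Plugging back and letting $H$ run through a F{\o}lner sequence, Lemma 2.5(iii) drives the bound to $0$, and the $\xi$-independence of the whole estimate yields a single null set valid for all characters.

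The main technical obstacle lies in this last step: Birkhoff for $y\mapsto f_\text{wm}(hy)\overline{f_\text{wm}(y)}$ supplies a null set depending on the parameter $h$, but I need one null set that works simultaneously for all $h$ in a full-Haar-measure subset of the possibly uncountable group $G$. This is precisely the role of Lemma 3.4 announced in the introduction, which I would invoke at exactly this point. A secondary, more routine, issue is to ensure that the F{\o}lner translation errors created by $g\mapsto h_2^{-1}g$ are uniform in $(h_1,h_2)\in H\times H$, which follows from Remark 2.2(i).
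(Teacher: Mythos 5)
Your route is essentially the paper's: decompose $L_2(X,\mu)=H_\text{kr}\oplus H_\text{wmix}$, handle the Kronecker part via eigenfunctions and the character averages of Proposition 2.4 (which you miscite as 2.3), handle the weakly mixing part via a van der Corput inequality together with Lemma 3.4 and Lemma 2.5, and pass to $L_1$ by density. Two of your choices differ only cosmetically from the paper: you invoke the finitary van der Corput inequality (Lemma 4.1) with an auxiliary compact set $H$ running through a F{\o}lner sequence, where the paper's proof of Theorem 3.1 uses the infinitary version (Lemma 3.2) and saves the finitary one for the uniform Theorem 1.1; and you run the approximation step through Lindenstrauss' maximal inequality and a Banach-principle argument, where the paper uses the elementary Lemma 3.3 based on generic points of $|f-f_j|$. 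Both substitutions are legitimate and, as you note, the $\xi$-independence of the error terms gives a single null set for all characters; your finitary set-up in fact already contains the uniformity needed for Theorem 1.1.

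There is, however, one genuine gap: integrability bookkeeping. You reduce $L_1$ to $L_2$ and then apply Lemma 4.1 to $u(g)=\xi(g)f_\text{wm}(gx)$ and Lemma 3.4 to the correlations of $f_\text{wm}$, but both of these lemmata require a \emph{bounded} function, and the orthogonal projection of an $L_2$ (or even $L_1$) function onto $H_\text{wmix}$ is a priori only in $L_2$. You must first reduce to $f\in L_\infty(X,\mu)$ and then argue that both components $f_\text{kr}$ and $f_\text{wm}$ remain bounded; this is exactly the point the paper makes by observing that $H_\text{kr}$ is the $L_2$-space of a factor, so the projection onto it is a conditional expectation and hence an $L_\infty$-contraction. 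Without this observation the van der Corput step and Lemma 3.4 are applied outside their hypotheses. The fix is standard and short, but it is a necessary ingredient, not a formality: it is precisely what allows the density argument at the end to start from functions to which the core estimates actually apply.
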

Here we will present an alternative direct proof based on the decomposition of $L_2(X,\mu)$ discussed above. We need three technical lemmata. The first one is an infinitary version of the van der Corput lemma for groups (see \cite[Lemma 5.1]{duven}). For more versions of the van der Corput lemma for groups see for instance \cite{duven2}.
\begin{lem}
Let $G$ be an amenable group with left Haar-measure $m_G$ and $( F_n )$ a left F{\o}lner-sequence in $G$. Further let $f: G \rightarrow \mathbb{C}$ be a bounded, Borel measurable function. If for every $h \in G$
\begin{align*}
\gamma_h:= \limsup\limits_{n \rightarrow \infty} \left| \frac{1}{m_G(F_n)} \int_{F_n} f(hg) \overline{f(g)} dm_G(g) \right|
\end{align*}
exists, then the following implication holds
\begin{align*}
\lim\limits_{n \rightarrow \infty} \frac{1}{m_G(F_n)^2}\int_{F_n} \int_{F_n} \gamma_{h_1h_2^{-1}} dm_G(h_1)dm_G(h_2) =0 \quad \implies
\quad \lim\limits_{n \rightarrow \infty} \frac{1}{m_G(F_n)} \int_{F_n} f(g) dm_G(g) =0.
\end{align*}
\end{lem}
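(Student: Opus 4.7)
The plan is to adapt the classical van der Corput trick to the amenable-group setting: express $S_n := \frac{1}{m_G(F_n)} \int_{F_n} f(g)\,dm_G(g)$ as a double average against an auxiliary compact set $H$, apply Cauchy--Schwarz in the outer variable to introduce the correlations $f(h_1 g)\overline{f(h_2 g)}$, and then use the left Følner property together with left-invariance of $m_G$ to reduce these correlations to the quantities $\gamma_{h_1 h_2^{-1}}$.

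In detail, fix any compact $H \subset G$ with $m_G(H) > 0$. For each $h \in H$, the change of variables $g \mapsto h^{-1}g$ (valid since $m_G$ is left-invariant) gives $\frac{1}{m_G(F_n)} \int_{F_n} f(hg)\,dm_G(g) = \frac{1}{m_G(F_n)} \int_{hF_n} f\,dm_G$, which differs from $S_n$ by at most $\|f\|_\infty \cdot m_G(hF_n \Delta F_n)/m_G(F_n)$, vanishing uniformly in $h \in H$ by Remark 2.3(i). Averaging over $H$, swapping integrals, and applying Cauchy--Schwarz yield
\[ |S_n|^2 \le \frac{1}{m_G(H)^2\,m_G(F_n)} \int_H \int_H \int_{F_n} f(h_1 g) \overline{f(h_2 g)}\,dm_G(g)\,dm_G(h_1)\,dm_G(h_2) + o(1). \]
Substituting $u = h_2 g$ in the innermost integral (again using left-invariance) rewrites it as $\int_{h_2 F_n} f(h_1 h_2^{-1} u) \overline{f(u)}\,dm_G(u)$, which in turn differs from $\int_{F_n} f(h_1 h_2^{-1} u) \overline{f(u)}\,dm_G(u)$ by at most $\|f\|_\infty^2 \cdot m_G(h_2 F_n \Delta F_n)$, vanishing uniformly in $h_2 \in H$ by Remark 2.3(i).

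Passing to $\limsup_n$ and invoking reverse Fatou (legitimate since the integrands are uniformly bounded by $\|f\|_\infty^2$ on the finite-measure set $H \times H$), one obtains for every fixed compact $H$ that
\[ \limsup_{n \to \infty} |S_n|^2 \le \frac{1}{m_G(H)^2} \int_H \int_H \gamma_{h_1 h_2^{-1}}\,dm_G(h_1)\,dm_G(h_2). \]
The conclusion then follows by specialising to $H = F_m$ and letting $m \to \infty$, since the hypothesis forces the right-hand side to zero. The main technical point is orchestrating this two-step limit correctly: all of the approximations must first be carried out with $H$ fixed so that the uniform Følner convergence from Remark 2.3(i) applies, and only afterwards can one insert the varying Følner sets $F_m$ in place of $H$. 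It is also worth noting that the change of variables $u = h_2 g$ uses only left-invariance, so no unimodularity assumption on $G$ is needed.
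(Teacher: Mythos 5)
Your argument is correct: the change of variables uses only left-invariance, the error terms are controlled uniformly on the fixed compact set $H$ via the uniform F{\o}lner property of Remark 2.2(i) (note the reference is 2.2, not 2.3), and the two-step limit --- first $n\to\infty$ with $H$ fixed, then $H=F_m$ with $m\to\infty$ --- is orchestrated in the right order. The paper does not prove this lemma itself (it is quoted from Duvenhage, Lemma 5.1 of \cite{duven}), but your computation is exactly the infinitary limit of the finitary van der Corput inequality the paper establishes as Lemma 4.1, so the approach coincides with the paper's own machinery.
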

The next lemma will be crucial for the approximation argument. It is a simple modification of \cite[Lemma 21.7]{eisner}. One just needs to replace the Ces\`{a}ro-averages $N^{-1} \sum_{n=1}^N$ by F{\o}lner-averages $m(F_n)^{-1} \int_{F_n}$ and therefore we omit the proof.
\begin{lem}
Let $G$ be an amenable group with left Haar-measure $m_G$, $( F_n )$ a left F{\o}lner-sequence in $G$ and $\xi \in \text{Hom}(G,\mathbb{T})$. Let $G$ act ergodically on $(X,\mu)$ and suppose $f, f_1, f_2, \ldots$ are integrable functions on $X$ such that $\|f-f_j\|_{L_1} \overset{j \rightarrow \infty}\longrightarrow 0$. If $x$ is generic for $|f_j|$ and $|f-f_j|$ for all $j \in \mathbb{N}$ and if the limits
\begin{align*}
\lim\limits_{n \rightarrow \infty} \frac{1}{m_G(F_n)} \int_{F_n} \xi(g) f_j(gx) dm_G(g):= b_j
\end{align*}
exist for every $j \in \mathbb{N}$, then also the limit $\lim\limits_{j \rightarrow \infty} b_j =:b$ exists and 
\begin{align*}
\lim\limits_{n \rightarrow \infty} \frac{1}{m_G(F_n)} \int_{F_n} \xi(g) f(gx)dm_G(g) =b.
\end{align*}
\end{lem}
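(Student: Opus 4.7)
The plan is to prove the lemma by a straightforward $\varepsilon/3$-approximation argument, which is the F{\o}lner-analog of the standard density argument used in Eisner's Lemma 21.7. The key idea is that the genericity hypothesis for $|f-f_j|$ converts the $L_1$-approximation of $f$ by the $f_j$ into a pointwise statement about the F{\o}lner-averages of $|f-f_j|$ along the orbit of $x$, and this is exactly what allows us to pass from the already-convergent $f_j$-averages to the $f$-averages.

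First I would establish that $(b_j)$ is Cauchy in $\mathbb{C}$. Since $|\xi(g)| = 1$ for all $g \in G$, one has for $j,k \in \mathbb{N}$
\begin{align*}
|b_j - b_k| &= \lim_{n \to \infty} \left| \frac{1}{m_G(F_n)} \int_{F_n} \xi(g)(f_j - f_k)(gx)\, dm_G(g) \right| \\
&\le \lim_{n \to \infty} \frac{1}{m_G(F_n)} \int_{F_n} \bigl(|f-f_j| + |f-f_k|\bigr)(gx)\, dm_G(g) \\
&= \|f-f_j\|_{L_1} + \|f-f_k\|_{L_1},
\end{align*}
where the final equality uses that $x$ is generic for $|f-f_j|$ and $|f-f_k|$. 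Since $\|f-f_j\|_{L_1} \to 0$, the sequence $(b_j)$ is Cauchy and converges to some $b \in \mathbb{C}$.

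To conclude, fix $\varepsilon > 0$ and choose $j$ with $\|f-f_j\|_{L_1} < \varepsilon/3$ and $|b_j - b| < \varepsilon/3$. The triangle inequality then yields
\begin{align*}
\left| \frac{1}{m_G(F_n)} \int_{F_n} \xi(g) f(gx)\, dm_G(g) - b\right|
&\le \frac{1}{m_G(F_n)} \int_{F_n} |f-f_j|(gx)\, dm_G(g) \\
&\quad + \left| \frac{1}{m_G(F_n)} \int_{F_n} \xi(g) f_j(gx)\, dm_G(g) - b_j \right| + |b_j - b|.
\end{align*}
By genericity of $x$ for $|f-f_j|$ the first summand converges to $\|f-f_j\|_{L_1} < \varepsilon/3$; by the defining property of $b_j$ the second summand tends to $0$; and the third is bounded by $\varepsilon/3$ by choice of $j$. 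Taking $n$ sufficiently large therefore gives a total bound of $\varepsilon$, which proves the claim. Since the whole argument is essentially a notational transcription of the Ces\`aro version with sums replaced by averaged integrals over F{\o}lner sets, I do not anticipate any genuine obstacle; the only point that merits careful bookkeeping is the twofold invocation of the genericity hypothesis, first to show $(b_j)$ is Cauchy and then to control the $f-f_j$ tail uniformly in $n$.
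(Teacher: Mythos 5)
Your argument is correct and is precisely the ``replace Ces\`aro averages by F{\o}lner averages'' adaptation of \cite[Lemma 21.7]{eisner} that the paper invokes while omitting the proof: genericity for $|f-f_j|$ and $|f-f_k|$ gives $|b_j-b_k|\le \|f-f_j\|_{1}+\|f-f_k\|_{1}$, hence $(b_j)$ is Cauchy, and the same genericity controls the tail term in the $\varepsilon/3$-split. No discrepancies to report.
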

Finally the following lemma is needed to construct the full measure subset of convergence for actions of uncountable groups.
\begin{lem}
Let $G$ be an LCA-group with Haar-measure $m_G$, which acts continuously on a compact space $X$. Further let $\mu$ be an ergodic measure on $X$, $f \in L_\infty(X,\mu)$ and $(F_n) \subset G$ a tempered strong F{\o}lner-sequence in $G$. Then there exists a subset $X' \subset X$ with $\mu(X')=1$ such that for all $x \in X'$ and for all $h \in G$ 
\begin{align}
\frac{1}{m_G(F_n)} \int_{F_n} f(hgx) \overline{f(gx)} dm_G(g) \overset{n \rightarrow \infty}\longrightarrow \int_X f(hy) \overline{f(y)} d \mu(y) =  \langle T_{h^{-1}} f, f \rangle.
\end{align}
\end{lem}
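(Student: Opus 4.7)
The plan is to reduce to the case of continuous $f$ — where the orbit map $h \mapsto f(h\cdot)\overline{f(\cdot)}$ is continuous into $C(X)$ with the sup-norm — and then close the $L^1$-approximation error by exploiting the fact that, for abelian $G$, the Birkhoff-generic set of an $L^1$-function is itself $G$-invariant. This circumvents the uncountable intersection problem: the union $\bigcup_{h\in G} N_h$ of Birkhoff-null sets indexed by $h$ need not be null, but the two devices will force the exceptional set to be a countable union.

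For the continuous reduction I pick, via Lusin's theorem and regularity of $\mu$, functions $f_m \in C(X)$ with $\|f_m\|_\infty \le \|f\|_\infty$ and $\|f - f_m\|_{L_1(\mu)} \to 0$. Write $\phi_h^{(m)} := f_m(h\cdot)\overline{f_m(\cdot)}$, $A_n^{(m)}(h,x) := m_G(F_n)^{-1}\int_{F_n}\phi_h^{(m)}(gx)\,dm_G(g)$, and $L^{(m)}(h) := \langle T_{h^{-1}}f_m, f_m\rangle$. Joint continuity of the action on the compact space $X$, together with uniform continuity of $(h,y)\mapsto f_m(hy)$ on compacta $K\times X$, makes $h\mapsto \phi_h^{(m)}$ continuous from $G$ into $(C(X),\|\cdot\|_\infty)$. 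Fix a countable dense $D\subset G$ (using second countability). For each $h_k\in D$, the Birkhoff theorem for tempered F{\o}lner sequences supplies a full-measure set $X_k^{(m)}$ on which $A_n^{(m)}(h_k,x)\to L^{(m)}(h_k)$. The inequality $|A_n^{(m)}(h,x)-A_n^{(m)}(h',x)|\le \|\phi_h^{(m)}-\phi_{h'}^{(m)}\|_\infty$, valid uniformly in $n$ and $x$, combined with continuity of $L^{(m)}$, promotes convergence from $D$ to every $h\in G$ on $X^{(m)}:=\bigcap_k X_k^{(m)}$ by a routine $\varepsilon/3$-argument.

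To descend to general $f\in L_\infty$, I would use the pointwise bound
\[
|\phi_h - \phi_h^{(m)}|(y) \le \|f\|_\infty\bigl(|f-f_m|(y)+|f-f_m|(hy)\bigr)
\]
together with the abelian identity $hgx=g(hx)$, yielding
\[
|A_n(h,x)-A_n^{(m)}(h,x)| \le \|f\|_\infty\bigl(S_n(|f-f_m|,x)+S_n(|f-f_m|,hx)\bigr),
\]
where $S_n(\psi,x):=m_G(F_n)^{-1}\int_{F_n}\psi(gx)\,dm_G(g)$. Because $G$ is abelian, every left F{\o}lner sequence is simultaneously a right F{\o}lner sequence, so a standard change-of-variables estimate gives $|S_n(\psi,g_0x)-S_n(\psi,x)|\le \|\psi\|_\infty\,m_G(F_n g_0\,\Delta\,F_n)/m_G(F_n)\to 0$; in particular the generic set $Y_m:=\{x:S_n(|f-f_m|,x)\to \|f-f_m\|_{L_1}\}$ is $G$-invariant. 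Hence on the full-measure set $X':=\bigcap_m X^{(m)}\cap\bigcap_m Y_m$ both averages on the right converge to $\|f-f_m\|_{L_1}$ for every $h\in G$. Combined with the continuous case and the estimate $|L(h)-L^{(m)}(h)|\le 2\|f\|_\infty\|f-f_m\|_{L_1}$ (obtained by integrating the pointwise bound and using $G$-invariance of $\mu$), the triangle inequality gives $\limsup_n|A_n(h,x)-L(h)|\le 4\|f\|_\infty\|f-f_m\|_{L_1}$; letting $m\to\infty$ finishes the proof.

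The principal obstacle is precisely the uncountable intersection problem highlighted in the introduction. It is resolved by the two devices above working in tandem: the reduction to continuous $f_m$ converts the $h$-oscillation of $A_n^{(m)}$ into a $C(X)$-continuity statement that holds uniformly in $n$ and $x$; and the abelian structure of $G$ promotes the generic sets for the error function $|f-f_m|$ to genuinely $G$-invariant sets, so that the shifted ergodic averages $S_n(|f-f_m|,hx)$ converge for \emph{every} $h\in G$ the moment the unshifted averages $S_n(|f-f_m|,x)$ do.
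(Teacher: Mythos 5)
Your proof is correct, and on the decisive technical point it takes a genuinely different route from the paper's. The skeleton is shared: approximate $f$ in $L_1$ by continuous $f_m$ with $\|f_m\|_\infty\le\|f\|_\infty$, treat the continuous approximants via a countable dense subset of $G$ together with the sup-norm continuity of $h\mapsto f_m(h\,\cdot)\,\overline{f_m(\cdot)}$ (this is the paper's Steps 3--5 in compressed form), and then control the $L_1$-error. The divergence is in how the shifted error term $m_G(F_n)^{-1}\int_{F_n}|f-f_m|(hgx)\,dm_G(g)$ is handled. The paper covers $hF_n$ by $h_{i_0}VF_n$ for a nearby element $h_{i_0}$ of the dense set and a compact symmetric unit neighbourhood $V$, splits off the $V$-boundary via Lemma 2.3, and invokes the \emph{strong} F{\o}lner property to kill the boundary contribution, together with genericity of the countably many translates $T_{h_i^{-1}}|f-f_k|$. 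You instead rewrite the term as $S_n(|f-f_m|,hx)$ using commutativity and observe that the ordinary F{\o}lner property makes the Birkhoff-generic set of the bounded function $|f-f_m|$ genuinely $G$-invariant, so the shifted averages converge for \emph{every} $h$ simultaneously. Your device is cleaner and shows that the strong F{\o}lner hypothesis is superfluous for this lemma (only temperedness is used, for Lindenstrauss's pointwise theorem); the trade-off is that the change of variables $hgx=g(hx)$ and the identification of left with right translates lean on $G$ being abelian, whereas the paper's boundary argument for this step would survive for non-abelian amenable groups, which is relevant to Remark 4.2(i). One small point to spell out when writing this up: the uniform continuity of $(h,y)\mapsto f_m(hy)$ on $K\times X$ should be obtained by a finite-subcover argument, since $X$ is only assumed compact and need not be metrizable --- the same step the paper leaves implicit in its Step 3.
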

Note that for a fixed $h \in G$ there exists a subset $X_h \subset X$ with $\mu(X_h)=1$ such that the convergence in (3.1) holds for this $h \in G$ and for all $x \in X_h$ by the Birkhoff ergodic theorem for amenable groups. So if $G$ is countable there is nothing to prove. But the uncountable case is a bit more involved.
\begin{proof}
\underline{Preparation:}\\
We need the following:
\begin{itemize}
\item For $f \in L_\infty(X)$ we can find a sequence $(f_k) \subset C(X)$ such that $f_k \overset{k \rightarrow \infty}\longrightarrow f$ in $L_1$ and $\| f_k \|_\infty \le \|f \|_\infty$ for all $k \in \mathbb{N}$. In particular for every $\varepsilon >0$ there exists a $k_0 \in \mathbb{N}$ such that
\begin{align*}
\|f-f_{k_0}\|_1 \le \varepsilon.
\end{align*}
\item Since $G$ is second countable we can find a countable dense subset $\left\{ h_i \right\}_{i=1}^\infty \subset G$. Fix a compact, symmetric neighbourhood $V$ of the unit $e \in G$, then for every $h \in G$, we can find a $h_{i_0}$ such that $h \in h_{i_0}V$. It follows that
\begin{align}
hF_n \subset h_{i_0} V F_n.
\end{align}
\item Let $X'$ be the full measure subset of $X$ which consists of the generic points of $|f-f_k|$ for every $k \in \mathbb{N}$ intersected with the set of generic points of $T_{h_i^{-1}} f_k \cdot \overline{f_k}$ for all $i \in \mathbb{N}$ and all $k \in \mathbb{N}$ and the set of generic points of $T_{h_i^{-1}} |f-f_{k}|$ for all $i \in \mathbb{N}$ and all $k \in \mathbb{N}$.
\end{itemize}
\underline{Estimation:}\\ 
Let $\varepsilon >0$ be arbitrary and $x \in X'$. \\ \\
1. We first prove that for $n$ large enough
\begin{align*}
\left| \frac{1}{m_G(F_n)} \int_{F_n} f(hgx) \overline{f(gx)} dm_G(g) - \frac{1}{m_G(F_n)} \int_{F_n} f(hgx) \overline{f_{k_0}(gx)} dm_G(g) \right| \le 2 \varepsilon \| f \|_\infty.
\end{align*}
This can be verified as follows
\begin{align*}
\bigg| \frac{1}{m_G(F_n)} \int_{F_n} &f(hgx) \overline{f(gx)} dm_G(g) - \frac{1}{m_G(F_n)} \int_{F_n} f(hgx) \overline{f_{k_0}(gx)}  dm_G(g) \bigg|\\ 
&\le \frac{1}{m_G(F_n)} \int_{F_n}  \underbrace{|f(hgx)|}_{\le \|f\|_\infty} |\overline{f(gx)} - \overline{f_{k_0}(gx)} | dm_G(g)\\
&\le \frac{1}{m_G(F_n)} \int_{F_n} 
|\overline{f(gx)} - \overline{f_{k_0}(gx)} | dm_G(g) \cdot \|f\|_\infty\\
&\overset{\text{Birkhoff}}\longrightarrow \int_X |\overline{f}(y) - \overline{f_{k_0}(y)} | d\mu(y) \cdot \| f \|_\infty = \| f -f_{k_0} \|_1 \| f \|_\infty\\
&\le \varepsilon \| f \|_\infty.
\end{align*}
Now the claim follows by choosing $n$ large enough.\\ \\
2. Now we prove that for $n$ large enough
\begin{align*}
\left| \frac{1}{m_G(F_n)} \int_{F_n} f(hgx) \overline{f_{k_0}(gx)} dm_G(g) - \frac{1}{m_G(F_n)} \int_{F_n} f_{k_0}(hgx) \overline{f_{k_0}(gx)}dm_G(g) \right| \le 2 \varepsilon \| f \|_\infty.
\end{align*}
This can be verified as follows
\begin{align*}
\bigg| \frac{1}{m_G(F_n)} \int_{F_n}  &f(hgx) \overline{f_{k_0}(gx)} dm_G(g) - \frac{1}{m_G(F_n)} \int_{F_n}  f_{k_0}(hgx) \overline{f_{k_0}(gx)} dm_G(g) \bigg|\\
&\le \frac{1}{m_G(F_n)} \int_{F_n} |f(hgx)-f_{k_0}(hgx)| \underbrace{|\overline{f_{k_0}(gx)|}}_{\le \|f_{k_0}\|_\infty \le  \|f\|_\infty} dm_G(g) \\
&\le \frac{1}{m_G(F_n)} \int_{h F_n} |f(gx)-f_{k_0}(gx)| dm_G(g) \cdot \|f\|_\infty\\
&\overset{(3.2)}\le \frac{1}{m_G(F_n)} \int_{h_{i_0} V F_n} |f(gx)-f_{k_0}(gx)| dm_G(g) \cdot \|f\|_\infty\\
&= \frac{1}{m_G(F_n)} \int_{V F_n} |f(h_{i_0}gx)-f_{k_0}(h_{i_0}gx)| dm_G(g) \cdot \|f\|_\infty\\
&\overset{(2.3)}\le \frac{1}{m_G(F_n)} \bigg( \int_{F_n} |f(h_{i_0}gx)-f_{k_0}(h_{i_0}gx)|dm_G(g)\\
&\quad \quad \quad \quad \quad \quad + \int_{\partial_V(F_n)} |f(h_{i_0}gx)-f_{k_0}(h_{i_0}gx)| dm_G(g) \bigg) \|f \|_\infty\\
&\le  \left(\frac{1}{m_G(F_n)} \int_{F_n} |f(gh_{i_0}x)-f_{k_0}(gh_{i_0}x)|dm_G(g) + \frac{m_G(\partial_V(F_n))}{m_G(F_n)} 2 \| f \|_\infty \right) \|f \|_\infty\\
&\overset{\text{Lindenstrauss} \atop \text{and strong F{\o}lner}}\longrightarrow \int_X |f(h_{i_0}y)-f_{k_0}(h_{i_0}y)| d\mu(y) \cdot \| f \|_\infty = \| f-f_{k_0}\|_1 \| f \|_\infty \le \varepsilon \| f \|_\infty.
\end{align*}
Now the claim follows by choosing $n$ large enough.\\ \\
3. Next we prove that for $h_{i}$ close to $h$
\begin{align*}
\left| \frac{1}{m_G(F_n)} \int_{F_n}  f_{k_0}(hgx) \overline{f_{k_0}(gx)} dm_G(g) -  \frac{1}{m_G(F_n)} \int_{F_n}  f_{k_0}(h_igx) \overline{f_{k_0}(gx)} dm_G(g) \right| \le \varepsilon \|f \|_\infty.
\end{align*}
This can be verified as follows
\begin{align*}
\bigg| \frac{1}{m_G(F_n)} \int_{F_n}  &f_{k_0}(hgx) \overline{f_{k_0}(gx)} dm_G(g) -  \frac{1}{m_G(F_n)} \int_{F_n}  f_{k_0}(h_igx) \overline{f_{k_0}(gx)} dm_G(g)  \bigg|\\
&\le \frac{1}{m_G(F_n)} \int_{F_n} \underbrace{|\overline{f_{k_0}(gx)}|}_{\le \| f_{k_0}\|_\infty \le \| f \|_\infty} |f_{k_0}(hgx) -  f_{k_0}(h_igx) | dm_G(g).\\
&\le \frac{1}{m_G(F_n)} \int_{F_n} |f_{k_0}(hgx) -  f_{k_0}(h_igx) | dm_G(g) \cdot \|f \|_\infty.
\end{align*}
Since the action of $G$ on the compact space $X$ is continuous and also $f_{k_0}$ is continuous, the difference $|f_{k_0}(hgx) -  f_{k_0}(h_igx) |$ is smaller than $\varepsilon$ if $h_i$ is close enough to $h$ for all $x \in X$. Now the claim follows.\\ \\
\noindent 4. The estimate
\begin{align*}
\left| \frac{1}{m_G(F_n)} \int_{F_n} f_{k_0}(h_igx) \overline{f_{k_0}(gx)} - \int_X f_{k_0}(h_iy) \cdot \overline{f_{k_0}(y)} d\mu(y) \right| \le \varepsilon
\end{align*}
for $n$ large enough is an immediate consequence of the Birkhoff ergodic theorem for amenable groups and the construction of the set $X' \ni x$.\\ \\
5. We continue by proving
\begin{align*}
\left| \int_X f_{k_0}(h_iy) \cdot \overline{f_{k_0}(y)} d\mu(y) - \int_X f_{k_0}(hy) \cdot \overline{f_{k_0}(y)} d\mu(y) \right| \le \varepsilon \| f \|_\infty
\end{align*}
for $h_i$ close to $h$. This can be verified as follows
\begin{align*}
&\left| \int_X f_{k_0}(h_iy) \cdot \overline{f_{k_0}(y)} d\mu(y) - \int_X f_{k_0}(hy) \cdot \overline{f_{k_0}(y)} d\mu(y) \right| \\
&\quad \quad \quad \quad \quad \quad \le \int_X \underbrace{|\overline{f_{k_0}}(y)|}_{\le \| f_{k_0} \|_\infty \le \| f \|_\infty} | f_{k_0}(h_iy) - f_{k_0} (hy)| d\mu(y).
\end{align*}
Again the claim follows because the difference $| f_{k_0}(h_iy) - f_{k_0} (hy)|$ is smaller than $\varepsilon$ for $h_i$ close enough to $h$ for all $y \in X$.\\ \\
\noindent 6. Now we prove that
\begin{align*}
\left| \int_X f_{k_0}(hy) \cdot \overline{f_{k_0}(y)} d\mu(y) - \int_X f(hy) \cdot \overline{f_{k_0}(y)} d\mu(y) \right| \le \varepsilon \| f \|_\infty.
\end{align*}
This can be verfied as follows
\begin{align*}
\bigg| \int_X f_{k_0}(hy) &\cdot \overline{f_{k_0}(y)} d\mu(y) - \int_X f(hy) \cdot \overline{f_{k_0}(y)} d\mu(y) \bigg| \le \int_X \underbrace{|\overline{f_{k_0}(y)}|}_{\le \|f_{k_0}\|_\infty \le \|f \|_\infty} |f_{k_0}(hy)-f(hy)| d\mu(y)\\
&\le \int_X |f_{k_0}(hy)-f(hy)| d\mu(y) \cdot \| f \|_\infty  \overset{G \curvearrowright X \atop \text{measure pres.}}= \int_X |f_{k_0}(y)-f(y)|d\mu(y) \cdot \| f \|_\infty\\
&= \| f_{k_0}-f \|_1 \| f \|_\infty \le \varepsilon \| f \|_\infty.
\end{align*}
\noindent 7. Analogously
\begin{align*}
\left| \int_X f(hy)  \cdot \overline{f_{k_0}}(y) d\mu(y) - \int_X f(hy) \cdot \overline{f(y)} d\mu(y) \right| \le \varepsilon \| f \|_\infty.
\end{align*}
\noindent \underline{Final Conclusion:}\\
Putting everything togehter yields for $n$ largh enough
\begin{align*}
\bigg| \frac{1}{m_G(F_n)} \int_{F_n} &f(hgx) \overline{f(gx)} dm_G(g) - \int_X f(hy) \cdot \overline{f(y)} d \mu(y) \bigg|\\
 &\le \varepsilon ( 2 \|f \|_\infty + 2 \|f \|_\infty  + \|f \|_\infty +1 + \|f \|_\infty +\|f \|_\infty + \|f \|_\infty )\\
 &= \varepsilon (1 + 8 \| f \|_\infty).
\end{align*}
Now the claim follows.
\end{proof}
\begin{proof}[Proof of Theorem 3.1]
First let $f \in L_\infty(X,\mu)$. Then we get $f=f_1 + f_2$ with $f_1 \in H_{\text{kr}}$ and $f_2 \in H_{\text{wmix}}$. Since $H_{\text{kr}}$ comes from a factor, which can be proven as in the classical case (see \cite{eisner}), we have $f_1 \in L_\infty(X)$ and hence also $f_2 \in L_\infty(X)$. If $f$ is an eigenfunction to the character $\eta \in \hat{G}$ we have
\begin{align*}
\frac{1}{m_G(F_n)} \int_{F_n} \xi(g) f(gx) dm_G(g) &= \frac{1}{m_G(F_n)} \int_{F_n} \xi(g) \eta(g^{-1}) f(x) dm_G(g)\\
&= \frac{1}{m_G(F_n)} \int_{F_n} \xi(g) \overline{\eta(g)} dm_G(g) f(x) \quad \text{a.s}.
\end{align*}
and this expression converges by Proposition 2.4. The convergence holds also for linear combinations of eigenfunctions to characters.\\ \\
Now suppose $f$ is in the closure, in particular $\|f-f_j\|_{L_2} \overset{j \rightarrow \infty}\rightarrow 0$, where each $f_j$ is a finite linear combination of eigenfunctions to characters. We have $\|f-f_j\|_{1} \le \|f-f_j\|_{2} \overset{j \rightarrow \infty}\rightarrow 0$. Let $\tilde{X_j} \subset X$ be the subset for which $\frac{1}{m_G(F_n)} \int_{F_n} \xi(g) f_j(gx) dm_G(g)$ converges, intersected with the set of generic points of $|f_j|$ and $|f-f_j|$ and set $\tilde{X}:= \bigcap_{j \in \mathbb{N}} \tilde{X_j}$. Then $\mu(\tilde{X})=1$ and for all $x \in \tilde{X}$ we have that $\frac{1}{m_G(F_n)} \int_{F_n} \xi(g) f(gx)dm_G(g)$ converges by Lemma 3.3.\\ \\
Now suppose $f \in H_{\text{wmix}} \cap L_\infty(X,\mu)$ and define the bounded function $\tilde{f}: G \rightarrow \mathbb{C}$ by $\tilde{f}(g):= \xi(g) f(gx)=\xi(g) T_{g^{-1}}f(x)$, where $\xi \in \hat{G}$ and $x \in X$. It holds that
\begin{align*}
\tilde{f}(hg) \overline{\tilde{f}(g)} = \xi(h) T_{g^{-1}}(T_{h^{-1}}f \overline{f} ) (x).
\end{align*}
Therefore we have
\begin{align*}
\left| \frac{1}{m_G(F_n)} \int_{F_n} \tilde{f}(hg) \overline{\tilde{f}(g)} dm_G(g) \right| \overset{n \rightarrow \infty}\longrightarrow |\langle T_{h^{-1}}f, f \rangle| =: \gamma_h,
\end{align*}
for every $x$ in the full measure subset of $X$ according to Lemma 3.4. By Lemma 2.5 we get
\begin{align*}
&\frac{1}{m_G(F_n)^2} \int_{F_n} \int_{F_n} \gamma_{h_1h_2^{-1}} dm_G(h_1) dm_G(h_2) = \frac{1}{m_G(F_n)^2} \int_{F_n} \int_{F_n} |\langle T_{h_1^{-1}h_2} f,f \rangle| dm_G(h_1) dm_G(h_2)\\
&\quad \quad \quad = \frac{1}{m_G(F_n^{-1})m_G(F_n)} \int_{F_n} \int_{F_n^{-1}} |\langle f,T_{h_1h_2}f \rangle| dm_G(h_1) dm_G(h_2) \overset{n \rightarrow \infty}\longrightarrow 0.
\end{align*}
In the first step we used here that $G$ is abelian and in the second step we used that $G$ is unimodular and therefore $\left\{ F_n^{-1} \right\}$ is again a F{\o}lner-sequence by Remark 2.2(iii). By Lemma 3.2 it follows that
\begin{align*}
\frac{1}{m_G(F_n)} \int_{F_n} \xi(g) f(gx) dm_G(g) \overset{n \rightarrow \infty}\longrightarrow 0.
\end{align*}
So far we proved the theorem for every $f \in L_\infty(X,\mu)$. Now suppose $f \in L_1(X,\mu)$. Then we can find a sequence $(f_j) \subset L_\infty(X,\mu)$ such that $\|f-f_j\|_{L_1} \overset{j \rightarrow \infty}\longrightarrow 0$. For every $j \in \mathbb{N}$ let $X_j$ be the set of points for which $\frac{1}{m_G(F_n)} \int_{F_n} \xi(g) f_j(gx) dm_G(g)$ converges, interesected with the set of generic points of $|f_j|$ and $|f-f_j|$. Then $\mu(X_j)=1$ and the same holds for $X':= \bigcap_{j \in \mathbb{N}} X_j$. Using Lemma 3.3 again yields for every $x \in X'$
\begin{align*}
\lim\limits_{n \rightarrow \infty}  \frac{1}{m_G(F_n)} \int_{F_n} \xi(g) f(gx)dm_G(g)  = \lim\limits_{j \rightarrow \infty}  \lim\limits_{n \rightarrow \infty}  \frac{1}{m_G(F_n)} \int_{F_n} \xi(g) f_j(gx)dm_G(g)
\end{align*}
and the limit exists.
\end{proof}
\section{Proof of Theorem 1.1}
We begin with the following version of van der Corput's inequality. The proof is a modification of \cite[Lemma 2.2]{assani}, see also \cite[Lemma I.3.1]{kuipers}.
\begin{lem}[Finitary version of van der Corput inequality]
Let $G$ be an amenable group with left Haar-measure $m_G$, $(F_n )$ a left F{\o}lner-sequence in $G$ and $f:G \rightarrow \mathbb{C}$ a bounded, measurable function. Then for every $n,H \in \mathbb{N}$ the following inequality holds
\begin{align*}
\left| \frac{1}{m_G(F_n)} \int_{F_n} f(g) dm_G(g) \right|^2 &\le \frac{1}{m_G(F_H)^2} \int_{F_H} \int_{F_H} \left| \frac{1}{m_G(F_n)} \int_{F_n} f(h_1h_2^{-1}g) \overline{f(g)} dm_G(g) \right| dm_G(h_1)dm_G(h_2)\\
&\quad \quad+ 3\sup\limits_{h \in F_H} \frac{m_G(hF_n \Delta F_n)}{m_G(F_n)}  \|f\|_\infty^2  + \left( \sup\limits_{h \in F_H} \frac{m_G(hF_n \Delta F_n)}{m_G(F_n)} \right)^2 \|f\|_\infty^2.
\end{align*}
\end{lem}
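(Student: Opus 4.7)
The plan is to imitate the classical ``averaging, then Cauchy--Schwarz, then expand the square'' proof of van der Corput's inequality, with F\o{}lner defects playing the role of the boundary errors that appear in the discrete version. Set
\begin{align*}
A := \frac{1}{m_G(F_n)}\int_{F_n} f(g)\,dm_G(g), \qquad \delta := \sup_{h \in F_H} \frac{m_G(hF_n \Delta F_n)}{m_G(F_n)};
\end{align*}
the quantity $\delta$ is the uniform F\o{}lner defect that will absorb every error term.

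First I will introduce an averaged version of $A$. Left-invariance of $m_G$ gives, via the substitution $u = hg$, the identity $\int_{F_n} f(hg)\,dm_G(g) = \int_{hF_n} f(u)\,dm_G(u)$, whose difference from $\int_{F_n} f(u)\,dm_G(u)$ is bounded by $\|f\|_\infty m_G(hF_n \Delta F_n)$. Averaging over $h \in F_H$ therefore yields $A = B + E$, where
\begin{align*}
B := \frac{1}{m_G(F_H) m_G(F_n)} \int_{F_H} \int_{F_n} f(hg)\,dm_G(g)\,dm_G(h),
\end{align*}
and $|E| \le \delta \|f\|_\infty$. Combined with the trivial bound $|B| \le \|f\|_\infty$, this gives $|A|^2 \le |B|^2 + 2\delta\|f\|_\infty^2 + \delta^2\|f\|_\infty^2$, which already accounts for the last two summands in the statement.

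Next I will bound $|B|^2$. Using Fubini to swap the order of integration, then Cauchy--Schwarz in $g$ against $\mathbf 1_{F_n}$, and finally expanding the resulting $|\int_{F_H} f(hg)\,dh|^2$ into a double integral over $F_H \times F_H$, the triangle inequality lets me pull the absolute value inside and deliver
\begin{align*}
|B|^2 \le \frac{1}{m_G(F_H)^2}\int_{F_H}\int_{F_H} \left| \frac{1}{m_G(F_n)}\int_{F_n} f(h_1 g)\overline{f(h_2 g)}\,dm_G(g)\right| dm_G(h_1)\,dm_G(h_2).
\end{align*}
Then a second substitution $u = h_2 g$ rewrites the innermost integral as $\int_{h_2 F_n} f(h_1 h_2^{-1} u)\overline{f(u)}\,dm_G(u)$, which differs from the corresponding integral over $F_n$ by at most $\|f\|_\infty^2 m_G(h_2 F_n \Delta F_n)$. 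This replacement produces the announced main term plus one further error of size $\delta\|f\|_\infty^2$.

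Summing the three error contributions gives exactly $(2\delta + \delta^2 + \delta)\|f\|_\infty^2 = (3\delta + \delta^2)\|f\|_\infty^2$, matching the claim. There is no conceptual obstacle; the one place that needs care is the bookkeeping, to confirm that both symmetry-breaking substitutions (first by $h$, then by $h_2$) produce errors that can be uniformly controlled by the \emph{same} quantity $\delta$, so that the final error constants land on $3$ and $1$ as stated rather than on some larger combination.
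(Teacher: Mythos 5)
Your proposal is correct and follows essentially the same route as the paper's proof: replace $A$ by its average $B$ over $h\in F_H$ at the cost of a $\delta\|f\|_\infty$ error (whence $|A|^2\le |B|^2+(2\delta+\delta^2)\|f\|_\infty^2$), apply Cauchy--Schwarz and expand, then shift the inner integral from $h_2F_n$ back to $F_n$ for one more $\delta\|f\|_\infty^2$, landing exactly on the constants $3$ and $1$. The only cosmetic difference is that you phrase the two correction steps as substitutions $u=hg$ and $u=h_2g$ where the paper adds and subtracts the shifted integral, which is the same computation.
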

\begin{proof}
For every $h \in F_H$ we have
\begin{align}
&\left| \frac{1}{m_G(F_n)} \int_{F_n} f(hg) dm_G(g) \right| \nonumber \\
&\quad \quad = \bigg| \frac{1}{m_G(F_n)} \int_{hF_n} f(g) dm_G(g) + \frac{1}{m_G(F_n)} \int_{F_n} f(g) dm_G(g) - \frac{1}{m_G(F_n)} \int_{F_n} f(g) dm_G(g) \bigg| \nonumber \\
&\quad \quad \le \frac{1}{m_G(F_n)} \left| \int_{hF_n \backslash (hF_n \cap F_n)} f(g)dm_G(g) - \int_{F_n \backslash (hF_n \cap F_n)}  f(g)dm_G(g) \right|  + \left| \frac{1}{m_G(F_n)} \int_{F_n} f(g)dm_G(g) \right| \nonumber \\
&\quad \quad \le \frac{1}{m_G(F_n)} \bigg( \|f\|_\infty m_G(hF_n \backslash (hF_n \cap F_n)) + \|f\|_\infty m_G (F_n \backslash (hF_n \cap F_n)) \bigg) + \left| \frac{1}{m_G(F_n)} \int_{F_n} f(g)dm_G(g) \right| \nonumber \\
&\quad \quad = \|f\|_\infty \frac{m_G(hF_n \Delta F_n)}{m_G(F_n)} + \left| \frac{1}{m_G(F_n)} \int_{F_n} f(g)dm_G(g) \right| \nonumber \\
&\quad \quad \le \|f\|_\infty \sup\limits_{h \in F_H} \frac{m_G(hF_n \Delta F_n)}{m_G(F_n)} + \left| \frac{1}{m_G(F_n)} \int_{F_n} f(g)dm_G(g) \right|.
\end{align}
Moreover we have
\begin{align*}
&\left| \frac{1}{m_G(F_n)} \int_{F_n} f(g) dm_G(g) \right|\\
&\quad \quad \le \left| \frac{1}{m_G(F_n)} \int_{F_n} f(g)dm_G(g) - \frac{1}{m_G(F_n)} \int_{F_n} \frac{1}{m_G(F_H)} \int_{F_H} f(hg) dm_G(h)dm_G(g) \right|\\
&\quad \quad \quad \quad \quad \quad \quad \quad \quad \quad \quad \quad \quad \quad \quad  + \left| \frac{1}{m_G(F_n)} \int_{F_n} \frac{1}{m_G(F_H)} \int_{F_H} f(hg) dm_G(h)dm_G(g) \right|\\
&\quad \quad = \frac{1}{m_G(F_n)m_G(F_H)} \left| \int_{F_H} \left( \int_{F_n} f(g) dm_G(g) - \int_{F_n} f(hg) dm_G(g) \right) dm_G(h) \right|\\
&\quad \quad \quad \quad \quad \quad \quad \quad \quad \quad \quad \quad \quad \quad \quad + \left| \frac{1}{m_G(F_n)} \int_{F_n} \frac{1}{m_G(F_H)} \int_{F_H} f(hg) dm_G(h)dm_G(g) \right|\\
&\quad \quad \le \frac{1}{m_G(F_n)m_G(F_H)} \int_{F_H} \underbrace{\left| \int_{F_n} f(g) dm_G(g) - \int_{hF_n} f(g) dm_G(g) \right|}_{\le \|f\|_\infty \sup\limits_{h \in F_H} m_G(hF_n \Delta F_n)}  dm_G(h)\\
&\quad \quad \quad \quad \quad \quad \quad \quad \quad \quad \quad \quad \quad \quad \quad + \left| \frac{1}{m_G(F_n)} \int_{F_n} \frac{1}{m_G(F_H)} \int_{F_H} f(hg) dm_G(h)dm_G(g) \right|\\
&\quad \quad \le \|f\|_\infty \sup\limits_{h \in F_H} \frac{m_G(hF_n \Delta F_n)}{m_G(F_n)} + \left| \frac{1}{m_G(F_n)} \int_{F_n} \frac{1}{m_G(F_H)} \int_{F_H} f(hg) dm_G(h)dm_G(g) \right|
\end{align*}
and therefore
\begin{align}
\left| \frac{1}{m_G(F_n)} \int_{F_n} f(g) dm_G(g) \right|^2 &\le \left| \frac{1}{m_G(F_n)} \int_{F_n} \frac{1}{m_G(F_H)} \int_{F_H} f(hg) dm_G(h)dm_G(g) \right|^2 \nonumber \\
&\quad \quad + 2 \|f\|_\infty^2 \sup\limits_{h \in F_H} \frac{m_G(hF_n \Delta F_n)}{m_G(F_n)} + \left( \sup\limits_{h \in F_H} \frac{m_G(hF_n \Delta F_n)}{m_G(F_n)} \right)^2 \|f\|_\infty^2.
\end{align}
For the first term we use the Cauchy-Schwarz inequality and obtain
\begin{align*}
&\left| \frac{1}{m_G(F_n)} \int_{F_n} \frac{1}{m_G(F_H)} \int_{F_H} f(hg) dm_G(h)dm_G(g) \right|^2\\
&\quad \quad \le \frac{1}{m_G(F_n)} \int_{F_n} \left| \frac{1}{m_G(F_H)} \int_{F_H} f(hg) dm_G(h) \right|^2 dm_G(g)\\
&\quad \quad = \frac{1}{m_G(F_n)} \int_{F_n} \frac{1}{m_G(F_H)^2}  \int_{F_H} \int_{F_H} f(h_1g) \overline{f(h_2g)} dm_G(h_1)dm_G(h_2) dm_G(g)\\
&\quad \quad = \frac{1}{m_G(F_H)^2} \int_{F_H} \int_{F_H} \frac{1}{m_G(F_n)} \int_{F_n} f(h_1g) \overline{f(h_2g)} dm_G(g)  dm_G(h_1) dm_G(h_2)\\
&\quad \quad  \le \frac{1}{m_G(F_H)^2} \int_{F_H} \int_{F_H} \left| \frac{1}{m_G(F_n)} \int_{F_n} f(h_1g) \overline{f(h_2g)} dm_G(g) \right| dm_G(h_1) dm_G(h_2)\\
\end{align*}
Now we repeat the same trick as in (4.1) to obtain
\begin{align}
&\frac{1}{m_G(F_H)^2} \int_{F_H} \int_{F_H} \left| \frac{1}{m_G(F_n)} \int_{F_n} f(h_1g) \overline{f(h_2g)} dm_G(g) \right| dm_G(h_1) dm_G(h_2) \nonumber \\
&\quad \quad= \frac{1}{m_G(F_H)^2} \int_{F_H} \int_{F_H} \bigg| \frac{1}{m_G(F_n)} \int_{h_2F_n} f(h_1h_2^{-1}g) \overline{f(g)} dm_G(g) + \frac{1}{m_G(F_n)} \int_{F_n} f(h_1h_2^{-1}g) \overline{f(g)} dm_G(g) \nonumber \\
& \quad \quad \quad \quad \quad \quad\quad\quad\quad\quad \quad \quad \quad \quad \quad \quad \quad \quad \quad \quad  - \frac{1}{m_G(F_n)} \int_{F_n} f(h_1h_2^{-1}g) \overline{f(g)} dm_G(g) \bigg| dm_G(h_1) dm_G(h_2) \nonumber \\
&\quad \quad \le \frac{1}{m_G(F_H)^2} \int_{F_H} \int_{F_H} \bigg( \left| \frac{1}{m_G(F_n)} \int_{F_n} f(h_1h_2^{-1}g) \overline{f(g)} dm_G(g) \right| \nonumber \\
& \quad \quad \quad \quad \quad \quad\quad\quad\quad\quad \quad \quad \quad \quad \quad \quad \quad \quad \quad \quad + \|f\|_\infty^2 \sup\limits_{h \in F_H} \frac{m_G(hF_n \Delta F_n)}{m_G(F_n)} \bigg) dm_G(h_1) dm_G(h_2) \nonumber \\
&\quad \quad= \frac{1}{m_G(F_H)^2} \int_{F_H} \int_{F_H} \left| \frac{1}{m_G(F_n)} \int_{F_n} f(h_1h_2^{-1}g) \overline{f(g)} dm_G(g) \right| dm_G(h_1) dm_G(h_2) \nonumber \\
& \quad \quad \quad \quad \quad \quad\quad\quad\quad\quad \quad \quad \quad \quad \quad \quad \quad \quad \quad \quad + \sup\limits_{h \in F_H} \frac{m_G(h F_n \Delta F_n)}{m_G(F_n)} \|f\|_\infty^2.
\end{align}
Using (4.2) and (4.3) we finally get
\begin{align*}
&\left| \frac{1}{m_G(F_n)} \int_{F_n} f(g) dm_G(g) \right|^2\\
&\quad \le \sup\limits_{h \in F_H} \frac{m_G(h F_n \Delta F_n)}{|F_n|} \|f\|_\infty^2 + \frac{1}{m_G(F_H)^2} \int_{F_H} \int_{F_H} \left| \frac{1}{m_G(F_n)} \int_{F_n} f(h_1h_2^{-1}g) \overline{f(g)} dm_G(g) \right| dm_G(h_1) dm_G(h_2)\\
&\quad \quad \quad \quad \quad \quad \quad \quad \quad \quad \quad \quad+ 2 \|f\|_\infty^2 \sup\limits_{h \in F_H} \frac{m_G(hF_n \Delta F_n)}{m_G(F_n)} + \left( \sup\limits_{h \in F_H} \frac{m_G(hF_n \Delta F_n)}{m_G(F_n)} \right)^2 \|f\|_\infty^2\\
&\quad = \frac{1}{m_G(F_H)^2} \int_{F_H} \int_{F_H} \left| \frac{1}{m_G(F_n)} \int_{F_n} f(h_1h_2^{-1}g) \overline{f(g)} dm_G(g) \right| dm_G(h_1) dm_G(h_2)\\
&\quad \quad \quad \quad \quad \quad \quad \quad \quad \quad \quad \quad+ 3 \sup\limits_{h \in F_H} \frac{m_G(hF_n \Delta F_n)}{m(_GF_n)} \|f\|_\infty^2 + \left( \sup\limits_{h \in F_H} \frac{m_G(hF_n \Delta F_n)}{m_G(F_n)} \right)^2 \|f\|_\infty^2.
\end{align*}
\end{proof}
\noindent Now we are able to prove our main theorem.
\begin{proof}[Proof of Theorem 1.1]
First let $f$ be bounded. Without loss of generality we can assume that $f \not= 0$. We consider bounded functions $\tilde{f}: G \longrightarrow \mathbb{C}$ defined by $\tilde{f}(g)=f(gx) \xi(g)$, where $\xi \in \hat{G}$ and $x \in X$. Note that $\|\tilde{f}\|_\infty = \sup\limits_{g \in G} |f(gx) \xi(g)| = \sup\limits_{g \in G} |f(gx)| \le \|f\|_\infty$. We are going to use Lemma 4.1. We have for all $x \in X$ out of the full measure subset according to Lemma 3.4
\begin{align}
&\sup\limits_{\xi \in \hat{G}} \frac{1}{m_G(F_H)^2} \int_{F_H} \int_{F_H} \left| \frac{1}{m_G(F_n)} \int_{F_n} f(h_1h_2^{-1}gx) \xi(h_1h_2^{-1}g) \overline{f(gx)}  \overline{\xi(g)} dm_G(g) \right| dm_G(h_1) dm_G(h_2)\nonumber \\
&\quad \quad \overset{ G \text{ LCA}}=  \frac{1}{m_G(F_H)^2} \int_{F_H} \int_{F_H} \left| \frac{1}{m_G(F_n)} \int_{F_n} f(gh_1h_2^{-1}x) \overline{f(gx)} dm_G(g) \right| dm_G(h_1) dm_G(h_2) \nonumber \\
&\quad \quad \overset{n \rightarrow \infty}\longrightarrow \frac{1}{m_G(F_H)^2} \int_{F_H} \int_{F_H} \left| \int_X f(h_1h_2^{-1}y) \overline{f(y)} d\mu(y) \right| dm_G(h_1) dm_G(h_2) \\
&\quad \quad =  \frac{1}{m_G(F_H)^2} \int_{F_H} \int_{F_H} \left| \langle T_{h_2} T_{h_1^{-1}} f, f \rangle \right| dm_G(h_1) dm_G(h_2) \nonumber \\
&\quad \quad = \frac{1}{m_G(F_H^{-1})m_G(F_H)} \int_{F_H} \int_{F_H^{-1}} \left| \langle f, T_{h_1} T_{h_2} f \rangle \right| dm_G(h_1) dm_G(h_2) \nonumber.
\end{align}
In the last step we used the fact that $G$ is unimodular. This has two consequences. The first one is that $m_G(A)=m_G(A^{-1})$ for every measurable subset $A \subset G$ and the second one is that if $(  F_n )$ is a F{\o}lner-sequence then $(F_n^{-1})$ is a F{\o}lner-sequence as well, see Remark 2.2(iii).\\ \\
The above yields that for an arbitrary $\varepsilon>0$ there exists $N_1 \in \mathbb{N}$ such that for every $n \ge N_1$ 
\begin{align*}
&\sup\limits_{\xi \in \hat{G}} \frac{1}{m_G(F_H)^2} \int_{F_H} \int_{F_H} \left| \frac{1}{m_G(F_n)} \int_{F_n} f(h_1h_2^{-1}gx) \xi(h_1h_2^{-1}g) \overline{f(gx)}  \overline{\xi(g)} dm_G(g) \right| dm_G(h_1) dm_G(h_2)\\
&\quad \quad \le \frac{1}{m_G(F_H^{-1})m_G(F_H)} \int_{F_H} \int_{F_H^{-1}} \left| \langle f, T_{h_1} T_{h_2} f \rangle \right| dm_G(h_1) dm_G(h_2) + \frac{\varepsilon}{4}.
\end{align*}
Since $f \in H_{\text{wmix}}$, Lemma 2.5 guarantees that there exists $H_1 \in \mathbb{N}$, such that for all $H \ge H_1$
\begin{align*}
\frac{1}{m_G(F_H^{-1})m_G(F_H)} \int_{F_H} \int_{F_H^{-1}} \left| \langle f, T_{h_1} T_{h_2} f \rangle \right| dm_G(h_1) dm_G(h_2) \le \frac{\varepsilon}{4}.
\end{align*}
Fix this $H_1 \in \mathbb{N}$. Since $(F_n)$ is a F{\o}lner-sequence, we find $N_2 \in \mathbb{N}$, such that for all $n \ge N_2$
\begin{align*}
\sup\limits_{h \in F_{H_1}} \frac{m_G(hF_n \Delta F_n)}{m_G(F_n)} \le \min \left\{ \frac{\varepsilon}{12 \|f\|_\infty^2}, \frac{\sqrt{\varepsilon}}{2 \|f\|_\infty} \right\}
\end{align*}
by Remark 2.2(i).\\ \\
Now set $N:= \max \left\{ N_1, N_2 \right\}$. Then for all $n \ge N$ we get by Lemma 4.1
\begin{align*}
&\sup\limits_{\xi \in \hat{G}} \left| \frac{1}{m_G(F_n)} \int_{F_n} f(gx) \xi(g) dm_G(g) \right|^2\\
&\quad \le \sup\limits_{\xi \in \hat{G}}\frac{1}{m_G(F_{H_1})^2} \int_{F_{H_1}} \int_{F_{H_1}} \left| \frac{1}{m_G(F_n)} \int_{F_n} f(h_1h_2^{-1}gx) \xi(h_1h_2^{-1}g) \overline{f(gx)}  \overline{\xi(g)} dm_G(g) \right| dm_G(h_1)dm_G(h_2)\\
&\quad \quad \quad \quad \quad \quad \quad \quad \quad \quad \quad \quad \quad \quad \quad \quad + 3\sup\limits_{h \in F_{H_1}} \frac{m_G(hF_n \Delta F_n)}{m_G(F_n)}  \|f\|_\infty^2  + \left( \sup\limits_{h \in F_{H_1}} \frac{m_G(hF_n \Delta F_n)}{m_G(F_n)} \right)^2 \|f\|_\infty^2\\
&\quad\le \frac{1}{m_G(F_{H_1})m_G(F_{H_1}^{-1})} \int_{F_{H_1}} \int_{F_{H_1}^{-1}} \left| \langle f, T_{h_1} T_{h_2} f \rangle \right| dm_G(h_1) dm_G(h_2) + \frac{\varepsilon}{4}\\
&\quad \quad \quad \quad \quad \quad \quad \quad \quad \quad \quad \quad \quad \quad \quad \quad + 3 \cdot \frac{\varepsilon}{12 \|f\|_\infty^2} \|f\|_\infty^2 + \frac{\varepsilon}{4 \|f\|_\infty^2} \|f\|_\infty^2\\
&\quad \le \frac{\varepsilon}{4} + \frac{\varepsilon}{4} + \frac{\varepsilon}{4} + \frac{\varepsilon}{4} =\varepsilon
\end{align*}
and therefore we  have
\begin{align*}
\sup\limits_{\xi \in \hat{G}} \left| \frac{1}{m_G(F_n)} \int_{F_n} f(gx) \xi(g) dm_G(g) \right| \overset{N \rightarrow \infty}\longrightarrow 0,
\end{align*}
which proves the theorem for all $f \in H_\text{wmix} \cap L_\infty(X,\mu)$.\\ \\
Now suppose that $f$ is not bounded. By the density of $L_\infty$ in $L_2$ and because $H_\text{kr}=H_\text{wmix}^\perp$ comes from a factor (see argumentation in the proof of Theorem 3.1) we can find  $f_k \in H_\text{wmix} \cap L_\infty(X,\mu)$, such that $\|f-f_k\|_{2} \overset{k \rightarrow \infty}\longrightarrow 0$. Now we have
\begin{align*}
&\left| \frac{1}{m_G(F_n)} \int_{F_n} f(gx) \xi(g) dm_G(g) \right|\\
&\quad \quad \le \frac{1}{m_G(F_n)} \int_{F_n} |f(gx) - f_k(gx)| dm_G(g) + \left| \frac{1}{m_G(F_n)} \int_{F_n} f_k(gx) \xi(g) dm_G(g) \right|.
\end{align*}
By Birkhoff's ergodic theorem for amenable groups we obtain for almost every $x \in X$
\begin{align*}
\frac{1}{m_G(F_n)} \int_{F_n} |f(gx) - f_k(gx)| dm_G(g) \overset{n \rightarrow \infty}\longrightarrow \int_X |f(x)-f_k(x)| d\mu(x) = \|f-f_k\|_{1}.
\end{align*}
Now fix a $k_0 \in \mathbb{N}$ such that $\|f-f_{k_0}\|_{1} \le \|f-f_{k_0}\|_{2} < \frac{\varepsilon}{3}$. Then there exists a $N \in \mathbb{N}$, such that for all $n \ge N$ we have
\begin{align*}
\sup\limits_{\xi \in \hat{G}} \left| \frac{1}{m_G(F_n)} \int_{F_n} f_{k_0}(gx) \xi(g) dm_G(g) \right| \le \frac{\varepsilon}{3}
\end{align*}
and
\begin{align*}
\frac{1}{m_G(F_n)} \int_{F_n} |f(gx) - f_{k_0}(gx)| dm_G(g) \le \|f-f_{k_0}\|_{1} + \frac{\varepsilon}{3}.
\end{align*}
This gives us for all $n \ge N$
\begin{align*}
\sup\limits_{\xi \in \hat{G}} \left| \frac{1}{m_G(F_n)} \int_{F_n} f(gx) \xi(g) dm_G(g) \right| \le \varepsilon
\end{align*}
and the first claim is completely proven.
\newpage
\noindent Now consider the uniquely ergodic case. Then we have uniform convergence of the ergodic averages for (arbitrary) F{\o}lner-sequences in the Birkhoff ergodic theorem, i.e.
\begin{align*}
\frac{1}{m_G(F_n)} \int_{F_n} f(gx) dm_G(g) \overset{n \rightarrow \infty} \longrightarrow \int_X f(y) d\mu(y)
\end{align*} 
uniformly in $x \in X$. This follows by a simple modification of \cite[Theorem 2.8]{assani} again by replacing Ces\`{a}ro-averages by F{\o}lner-averages. Now the convergence in (4.4) is also uniform and we are able to continue the proof in the same manner as before. We still need the strong F{\o}lner-property in order to apply Lemma 3.4 in this proof. Note that every continuous function on a compact space is bounded and therefore we do not need the approximation argument at the end of the proof in this case.
\end{proof}
\begin{rem}
\begin{enumerate}
\item[(i)] Zorin-Kranich (see \cite{pavel}) showed that the claim in Theorem 3.1 remains true, if the group $G$ is an arbitrary amenable group and the action is only measurable and not necessarily continuous. Thus the question arises whether the claim in Theorem 1.1 remains true as well if we consider arbitrary measurable actions of amenable groups. Note that only the proof of Lemma 3.4 relies on the continuity of the group action and an important tool in the proof of the main theorem, Lemma 4.1, is true for general amenable groups. 
\item[(ii)] Note that we do not need to apply Lemma 3.4 in the proof of Theorem 3.1 and Theorem 1.1, if the group $G$ is countable. By going through the proof carefully, it turns out that both theorems remain true if we consider arbitrary measurable actions of countable LCA-groups.
\end{enumerate}
\end{rem}
\begin{bsp}
Consider $G=(\mathbb{Z}^d,+)$. It is well known that
\begin{align*}
F_n= \left\{ -n, -(n-1), \ldots , n-1,n \right\}^d
\end{align*}
is a tempered strong F{\o}lner-sequence in $G$ and the counting measure is a Haar-measure on $G$. Every character $\xi: G \longrightarrow \mathbb{T}$ is of the form
\begin{align*}
\xi(n_1, \ldots , n_d)= \lambda_1^{n_1} \cdot \ldots \cdot \lambda_d^{n_d}
\end{align*}
for some $\lambda_1, \ldots , \lambda_d \in \mathbb{T}$.\\ \\
By Theorem 3.1 we obtain that for every ergodic action of $G$ on $X$, where $(X,\mu)$ is a probability space, and every $f \in L_1(X)$, there exists a full measure subset $X' \subset X$ such that 
\begin{align*}
\frac{1}{(2N+1)^d} \sum\limits_{n_1, \ldots , n_d=-N}^N \lambda_1^{n_1} \cdot \ldots \cdot \lambda_d^{n_d} f((n_1, \ldots , n_d)x)
\end{align*}
converges for all $\lambda_1, \ldots , \lambda_d \in \mathbb{T}$ and every $x \in X'$.\\ \\
Now consider a Bernoulli-shift, i.e. let $G$ act on $X = \left\{ 0, \ldots , k-1\right\}^{\mathbb{Z}^d}$ by shift, where $k \ge 2$ is an integer and $X$ is equipped with the product $\sigma$-algebra. It is well known that this system is strongly mixing, which has the consequence that the Kronecker factor consists of constant functions only. Therefore by Theorem 1.1 we obtain that for every $f \in L_2(X)$ with $\langle f, 1 \rangle =0$ there exists a subset $X' \subset X$ with $\mu(X')=1$, such that 
\begin{align*}
\sup\limits_{\lambda_1, \ldots , \lambda_d \in \mathbb{T}} \left| \frac{1}{(2N+1)^d} \sum\limits_{n_1, \ldots ,n_d=-N}^N \lambda_1^{n_1} \cdot \ldots \cdot \lambda_d^{n_d} f((x_{j+n_1}^{(1)}, \ldots , x_{j+n_d}^{(d)})_{j \in \mathbb{Z}}) \right| \overset{N \rightarrow \infty}\longrightarrow 0
\end{align*}
for every $x=(x_j^{(1)}, \ldots , x_j^{(d)})_{j \in \mathbb{Z}} \in X'$.
\end{bsp}
\noindent \textbf{Acknowledgement.} The author is grateful for the support of the International Max Planck Research School for Mathematics in the Sciences Leipzig. Moreover he wants to thank Tanja Eisner and Felix Pogorzelski for several helpful discussions.

\end{document}